\def\Pal{{\sf Pal}}
\def\pal{{\cal P}}
\def\E{{\sf E}}
\begin{document}

\title{The Number of Distinct Subpalindromes in Random Words}
\author{Mikhail Rubinchik\\ 
Ural Federal University\\
Ekaterinburg, Russia\\
{\tt mikhail.rubinchik@gmail.com}
\and Arseny M. Shur\corresponding \\ 
Ural Federal University\\
Ekaterinburg, Russia\\
{\tt arseny.shur@urfu.ru}
}
\runninghead{M. Rubinchik and A.M. Shur}{The Number of Distinct Subpalindromes in Random Words}
\maketitle

\vspace*{-2.5cm}
\begin{abstract}
We prove that a random word of length $n$ over a $k$-ary fixed alphabet contains, on expectation, $\Theta(\sqrt{n})$ distinct palindromic factors. We study this number of factors, $\E(n,k)$, in detail, showing that the limit $\lim_{n\to\infty}\E(n,k)/\sqrt{n}$ does not exist for any $k\ge2$, $\liminf_{n\to\infty}\E(n,k)/\sqrt{n}=\Theta(1)$, and $\limsup_{n\to\infty}\E(n,k)/\sqrt{n}=\Theta(\sqrt{k})$. Such a complicated behaviour stems from the asymmetry between the palindromes of even and odd length. We show that a similar, but much simpler, result on the expected number of squares in random words holds. We also provide some experimental data on the number of palindromic factors in random words.
\end{abstract}

\section{Introduction}

Palindromes are among the most important and actively studied repetitions in words. Recall that a word $w=a_1\cdots a_n$ is a palindrome if $a_1\cdots a_n=a_n\cdots a_1$. In particular, all letters are palindromes; the empty word is also considered as a palindrome, but throughout this paper we do not count it. Palindromes are objects of intensive study since 1970s. One direction of this study is formed by different counting problems; see, for example, \cite{RiSh14}, where the asymptotic growth of the language of \emph{palstars} (words that are concatenations of even-length palindromes) is found. An important group of problems within this direction concerns the possible number of distinct palindromic factors, or subpalindromes, in a word. We call this number \emph{palindromic richness}. 

Clearly, for the words containing $k$ different letters the lower bound for their palindromic richness is $k$. If $k>2$, then this bound is sharp, since the infinite periodic word $(a_1\cdots a_k)^\omega$, where $a_1,\ldots, a_k$ are different letters, has no subpalindromes except letters. For $k=2$ the lower bound is less straightforward: the minimum richness of an infinite word is 8 and the minimum richness of an \emph{aperiodic} infinite word is 10 \cite{FiZa13}. (Moreover, the minimum richness of a finite word of length $\ge 9$ is 8.) On the other hand, the maximum richness of an $n$-letter word over any alphabet is $n$, as was first observed in \cite{DJP01}. Such ``rich'' words are objects of intensive study (see, e.g., \cite{GJWZ09}). Still, little is known about the number of rich words of a given length. Currently, the best lower bound on the number of binary rich words is of the form $\frac{C^{\sqrt{n}}}{p(n)}$, where $p(n)$ is a polynomial and $C\approx 37$ \cite{GSS15}. In the same paper, it was conjectured that this number is upper bounded by $n^{\sqrt{n}}$, while the best proved upper bound is of order $1.605^n$. Anyway, most of the words are not rich, and it is quite interesting to see how the palindromic richness behaves in the generic case. We will show, in a straightforward way, that any richness between the two extremums is reachable:

\begin{proposition} \label{ktoN}
Any number between 8 and $n$ in the binary case, and between $k$ and $n$ in the $k$-ary case with $k>2$ is the palindromic richness of some word of length $n$.
\end{proposition}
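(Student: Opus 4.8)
The plan is to build, for every admissible target richness $r$, an explicit length-$n$ word attaining it, using a single tunable family in the main regime $k\ge3$ and a separate argument in the binary case. The basic tool is the folklore fact (Droubay--Justin--Pirillo) that appending one letter to a word creates at most one new distinct subpalindrome, namely its longest palindromic suffix when that suffix is new. This immediately gives the upper endpoint: the unary word $a_1^n$ has richness exactly $n$, since every prefix $a_1^i$ contributes the fresh palindrome $a_1^i$. For the lower endpoints I would take the periodic rainbow $(a_1a_2\cdots a_k)$ repeated and truncated to length $n$, whose only subpalindromes are the $k$ letters (when $k\ge3$, so that the period is $\ge3$ and no factor of length $\ge2$ can be a palindrome), and, in the binary case, a word of richness $8$ guaranteed by the cited minimum-richness result for lengths $\ge9$.

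For $k\ge3$ I would interpolate with the one-parameter family $W_t=a_1^t\cdot(a_1a_2\cdots a_k)^{\ast}$ truncated to length $n$, for $t=0,1,\ldots,n-k$. Here the leading block of $a_1$'s has length $t+1$ (the run absorbs the first letter of the rainbow), and every other occurrence of $a_1$ is isolated between other letters, so the powers $a_1,a_1^2,\ldots,a_1^{t+1}$ account for $t+1$ distinct palindromes while the remaining letters $a_2,\ldots,a_k$ give $k-1$ more. The crucial point is that no palindrome of length $\ge2$ crosses the junction: such a palindrome would begin with $a_1$ inside the run and end with $a_1$ inside the rainbow, forcing the letter preceding that rainbow $a_1$ (which is $a_k$) to match either $a_1$ or $a_2$ --- impossible precisely because $k\ge3$. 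Hence $\mathrm{richness}(W_t)=t+k$, and as $t$ runs through $0,\ldots,n-k$ this sweeps every value from $k$ (the rainbow, $t=0$) up to $n$ (the word $a_1^{n-k+1}a_2\cdots a_k$, $t=n-k$), as required.

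The binary case is the delicate one and I expect it to be the main obstacle, since one cannot reserve a palindrome-poor disjoint subalphabet for the suffix. Here I would trade a run of one letter against a prefix of a fixed minimal-richness binary word $\beta$ (richness $8$), considering words of the shape $\mu_\ell\,a^{\,n-\ell}$, where $\mu_\ell$ is the length-$\ell$ prefix of $\beta$. When the $a$-run is long, each additional $a$ beyond the bounded $a$-powers occurring in $\mu_\ell$ contributes exactly one fresh palindrome, so decreasing the run length by one while lengthening $\mu_\ell$ by one changes the richness by $0$ or $-1$ only. By this unit-step control the family attains every integer between its two extreme values, which are $n$ (the almost-unary end) and $8$ (the minimal word, anchored by the cited result); small $n$ would be checked by hand. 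The technical heart --- and the step I would spend the most care on --- is proving that the richness really moves in steps of at most one along this family, i.e.\ bounding the interaction of the run with the palindromes of $\mu_\ell$ at their junction, for which the append-one-letter lemma and the boundedness of the palindromes of $\beta$ are the main levers.
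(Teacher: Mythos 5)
Your construction for $k\ge3$ is essentially the paper's own proof: both prepend a run $a_1^t$ to the rainbow word $(a_1\cdots a_k)^\omega$ (truncated to total length $n$), observe that the run absorbs the leading $a_1$ of the rainbow, and count exactly $t+k$ palindromes after checking that nothing of length $\ge2$ crosses the junction. That half is correct and complete.

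The binary case is where you diverge, and it is where your argument has a genuine gap. You replace an exact count by a discrete intermediate-value argument over the family $\mu_\ell\, a^{\,n-\ell}$, resting on the claim that consecutive members differ in richness by at most one. You explicitly defer the proof of this unit-step lemma, but it is the entire content of the binary case: when $\beta[\ell+1]\ne a$ the two words $\mu_\ell a^{\,n-\ell}$ and $\mu_{\ell+1}a^{\,n-\ell-1}$ differ in an interior letter, and changing one letter can in general create or destroy several palindromes at once (e.g.\ junction-crossing palindromes of the form $0^i 1 0^i$ or $0^i 1 1 0^i$ appear and disappear as the boundary moves, in addition to the loss of the longest power $a^{\,n-\ell}$). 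Nothing you have written rules out a step of size $2$, so the interpolation is not established. The paper avoids this entirely by making the binary case exactly parallel to the $k\ge3$ case: it takes the specific word $u=(001101)^\omega$, whose \emph{only} palindromic factors are $0,1,00,11,010,101,0110,1001$ (all present already in $u[1..9]$), and prepends $0^{l-8}$; since $u$ begins with $00$, the prepended run merges into $0^{l-6}$ and the only new palindromes are $0^3,\ldots,0^{l-6}$, giving an exact count of $l$ for every $l$ from $8$ to $n-1$, with $0^n$ supplying richness $n$. If you want to salvage your route, the cleanest fix is to abandon the generic minimal word $\beta$ and the unit-step lemma and instead pick a concrete $\beta$ whose palindromic factors you can list, which is precisely what the paper does.
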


So, the following question is quite natural:
\begin{center}
\emph{what is the expected palindromic richness of a random word of length $n$?}
\end{center}
The following theorem, which is our main result, provides a detailed answer to this question. Note that the bigger is the alphabet, the less probable is that a random word will be a palindrome; so, statements 3 and 4 of this theorem seem rather unexpected.

\begin{theorem} \label{main}
Let $k\ge 2$.\\
(1) The expected palindromic richness $\E(n,k)$ of a random $k$-ary word of length $n$ is $\Theta(\sqrt{n})$ as $n\to \infty$ with $k$ fixed.\\
(2) The ratio $\frac{\E(n,k)}{\sqrt{n}}$ has no limit as $n\to \infty$ with $k$ fixed.\\
(3) The function $\underline{C}(k)=\liminf_{n\to\infty} \frac{\E(n,k)}{\sqrt{n}}$ is $\Theta(1)$ as $k\to \infty$.\\
(4) The function $\overline{C}(k)=\limsup_{n\to\infty} \frac{\E(n,k)}{\sqrt{n}}$ is $\Theta(\sqrt{k})$ as $k\to \infty$.
\end{theorem}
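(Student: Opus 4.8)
The plan is to evaluate $\E(n,k)$ by linearity of expectation over all palindromes. Writing $\mathbb{1}[p\subseteq w]$ for the event that a palindrome $p$ is a factor of the random word $w$, we get $\E(n,k)=\sum_p\Pr[p\subseteq w]$, the sum running over all nonempty palindromes, of which there are exactly $k^{\lceil\ell/2\rceil}$ of each length $\ell$. Fix such a $p$, let $N=n-\ell+1$ be the number of admissible positions, and let $X$ count the occurrences of $p$; then $\E[X]=Nk^{-\ell}$ and the union bound gives $\Pr[p\subseteq w]\le\min(1,Nk^{-\ell})$. For a matching lower bound I would use the second moment method: two occurrences at distance $0<d<\ell$ force period $d$ on $p$ and then co-occur with probability $k^{-(\ell+d)}$, so $\E[X^2]\le(\E X)^2+\E X+2Nk^{-\ell}\sum_{0<d<\ell}k^{-d}\le(\E X)^2+(1+\tfrac{2}{k-1})\E X$. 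The decisive point is that this period correction is a \emph{convergent} geometric series, uniformly in $k$ and in the period structure of $p$, so Paley--Zygmund yields $\Pr[p\subseteq w]\ge\frac{\E X}{\E X+1+2/(k-1)}$ and hence $\Pr[p\subseteq w]=\Theta(\min(1,Nk^{-\ell}))$ with absolute constants. This reduces the problem to estimating $\sum_{\ell\ge1}k^{\lceil\ell/2\rceil}\min(1,Nk^{-\ell})$.

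The next step is to evaluate this sum, treating even and odd lengths separately because of the ceiling in $k^{\lceil\ell/2\rceil}$. In each parity the summand peaks at the threshold $\ell\approx\log_k n$, where $Nk^{-\ell}\approx1$, and decays geometrically on both sides, so each tail is controlled by its peak value. Writing $t=\{\tfrac12\log_k n\}$ for the fractional part, evaluation of the geometric series shows that the even lengths contribute $\Theta(k^{-t}\sqrt n)$, a quantity of order $\sqrt n$ with $k$-bounded constant, whereas the odd lengths contribute between $\Theta(\sqrt n)$ and $\Theta(\sqrt{kn})$ according to $t$; collecting terms gives $\E(n,k)=\big(F_k(t)+o(1)\big)\sqrt n$ for an explicit piecewise function $F_k$ built from powers $k^{\pm t}$. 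The structural reason for the asymmetry is that an odd length $\ell$ carries $k^{(\ell+1)/2}$ palindromes, an extra factor $\sqrt k$ over an even length, so when $\log_k n$ is near an odd integer this half-power of $k$ survives in $F_k(t)$, while near an even integer it cancels out.

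From $\E(n,k)=(F_k(t)+o(1))\sqrt n$ the four claims follow by analysing $F_k$ as $n$, hence $t$, varies; here I use that $\log_k(n{+}1)-\log_k n\to0$, so $t$ is dense in $[0,1)$. Part (1) is immediate, since $F_k(t)=\Theta(1)$ for fixed $k$ uniformly in $t$. For (4) the maximum of $F_k$ over $[0,1)$ is attained when $\log_k n$ is near an odd integer (so $t\approx\tfrac12$), where the $\sqrt k$ surplus of the odd lengths survives, yielding $\overline{C}(k)=\max_t F_k(t)=\Theta(\sqrt k)$, with the matching bound $\E(n,k)=O(\sqrt{kn})$ valid for every $n$. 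For (3) the minimum of $F_k$, attained when $\log_k n$ is near an even integer, lacks this surplus and lies between absolute positive constants, so $\underline{C}(k)=\min_t F_k(t)=\Theta(1)$. Finally, (2) holds because $F_k$ is non-constant for every $k\ge2$, whence $\liminf<\limsup$ and $\E(n,k)/\sqrt n$ has no limit.

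The hard part is the lower bound underlying the $\Theta$ of the first paragraph, and sharpening it enough for the finer claims. Two points need care. First, the constant in $\Pr[p\subseteq w]=\Theta(\min(1,Nk^{-\ell}))$ must be independent of $k$; otherwise the implied constants in $\overline{C}(k)=\Theta(\sqrt k)$ would themselves depend on $k$. The convergent bound $\sum_{0<d<\ell}k^{-d}\le\frac1{k-1}$ on the period correction is precisely what delivers this uniformity. Second, for part (2) I must ensure that the oscillation amplitude $\max_t F_k-\min_t F_k$ genuinely exceeds the $o(1)$ error for \emph{every} $k\ge2$, in particular for $k=2$, where this amplitude is only a bounded constant rather than growing like $\sqrt k$. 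Establishing this forces one to keep explicit track of the boundary terms of the geometric sums near $\ell=\log_k n$, exactly the range where the approximation $\Pr[p\subseteq w]\approx\min(1,Nk^{-\ell})$ is weakest.
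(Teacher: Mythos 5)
Your second-moment argument is a genuinely different and more elementary route to parts (1), (3) and (4) than the one in the paper. The paper never uses Paley--Zygmund; its lower bound comes from the Guibas--Odlyzko asymptotics $A_w(n)=C_w\theta_w^n+O(1.7^n)$ for the number of words avoiding $w$, combined with the observation that the border polynomial of a palindrome is dominated by that of $a^m$ (and, for the sharper bounds, that a random palindrome has only borders of length $O(\log m)$ with high probability). Your computation $\E[X^2]\le(\E X)^2+\bigl(1+\tfrac{2}{k-1}\bigr)\E X$ is correct (overlapping occurrences at shift $d$ force period $d$ and then cost $k^{-(\ell+d)}$), and the resulting bound $\Pr[p\subseteq w]\ge \E X/(\E X+1+2/(k-1))$ does have the $k$-uniformity you need. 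Combined with the union-bound upper envelope and the parity-split summation (which is essentially the paper's Section~3 computation with the intersection points $p_e$, $p_o$), this gives $\E(n,k)=\Theta(\sqrt n)$, $\underline{C}(k)=\Theta(1)$ and $\overline{C}(k)=\Theta(\sqrt k)$ with absolute constants, bypassing the correlation-polynomial machinery entirely. That is a real simplification for these three parts.

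However, part (2) is not established by your argument, and the gap is not the bookkeeping issue you flag at the end but a structural one. Paley--Zygmund and the union bound sandwich $\Pr[p\subseteq w]$ between $\E X/(\E X+3)$ and $\min(1,\E X)$, which at the critical scale $\E X\approx 1$ differ by a factor of about $4$. So what you actually obtain is $c\,G_k(t)\le \E(n,k)/\sqrt n\le C\,G_k(t)$ for an envelope $G_k$ and absolute constants $c<C$ --- \emph{not} the asymptotic equality $\E(n,k)=(F_k(t)+o(1))\sqrt n$ claimed in your second paragraph. A two-sided bound with constant-factor slack cannot rule out convergence of $\E(n,k)/\sqrt n$ unless the oscillation of the envelope exceeds that slack; this happens for large $k$ (where $\limsup=\Theta(\sqrt k)$ beats $\liminf=\Theta(1)$), but not for small $k$. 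For $k=2$ the paper computes $\max F\approx 2.55775$ versus $\min F\approx 2.55647$, a relative oscillation of about $5\cdot10^{-4}$, far below any factor-$4$ slack. To detect it one needs the exact first-order constant in $\Pr[p\subseteq w]$, namely $1-e^{-1/k^{2\varepsilon}}+o(1)$, and this constant depends on the correlation structure of $p$ (compare the paper's \eqref{termam} for $a^m$, where the exponent is $\tfrac{k-1}{k^{1+2\varepsilon}}$, with \eqref{termaba} for a typical palindrome): one must show that almost all palindromes have only short borders and then extract the precise exponential via Guibas--Odlyzko or an equivalent Poisson/clump approximation. Moreover, even granting such precision, the paper still has to verify by explicit computation that $F(k,\varepsilon)$ is not $\tfrac12$-periodic, separately for $k=2,3$ and via the inequality \eqref{fk0} for $k\ge4$. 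None of this is recoverable from $\min(1,Nk^{-\ell})$, so part (2) remains unproved in your proposal.
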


We also give more precise theoretical estimation of the quantities $\underline{C}(k)$ and $\overline{C}(k)$ for some alphabets and compare them to the results of our experiments. Finally, we show that our technique allows one to get, in a much easier way, the bound $\Theta(\sqrt{n})$ on the number of \emph{squares} in a random word. 

The text is organized as follows. Section 2 contains notation, definitions, and the proof of Proposition~\ref{ktoN}. In Sections 3--5 we prove Theorem~\ref{main}. In Sect.~3, we prove the upper bound $O(\sqrt{n})$ and find the range of lengths, containing the main part of all distinct palindromic factors. Then in Sect.~4--5 we study the probability of getting a palindromic factor of a given length from a prescribed range, 
using the results of Guibas and Odlyzko \cite{GuOd78,GuOd81} on factor avoidance. The final Sect.~6 is devoted to numerical studies and to extending  our methods to counting the expected number of squares instead of palindromes.

\section{Preliminaries}

We study non-empty words over finite alphabets, using the array notation $w=w[1..n]$ when appropriate and writing $|w|$ for the length of $w$. Any word $w[i..j]$, where $1\le i\le j\le n$, is a \emph{factor} of $w$; a factor of the form $w[1..j]$ (resp., $w[i..n]$) is called a \emph{prefix} (resp., a \emph{suffix}) of $w$. A \emph{square} is any word of the form $ww$. By $u^\omega$ we denote the right-infinite word obtained by concatenation of an infinite sequence of copies of the word $u$.

A word satisfying $w[i]=w[n{-}i]$ for all $i=1,\ldots,n$, is a palindrome. \emph{Palindromic richness} of a word $w$ is the number of distinct palindromes which are factors of $w$.

By a \emph{random $k$-ary word of length $n$} we mean the random variable equidistributed among all $k$-ary words of length $n$. The \emph{expected} palindromic richness $\E(n,k)$ of this random word is the main characteristic studied in this paper.

Throughout the paper, the notation $\log$ always stands for the base $k$ logarithm; the natural logarithm is denoted by $\ln$.

\medskip
\textbf{Proof of Proposition~\ref{ktoN}:}\\[2pt]
Let $k>2$ and $w=(a_1\cdots a_k)^\omega$. The word $a_1^{l-k}w[1..n{-}l{+}k]$ of length $n$ has exactly $l$ palindromes: all letters plus the palindromes $a_1^i$ for $i=2,\ldots,l{-}k{+}1$. Since $l$ can be an arbitrary integer between $k$ and $n$, we are done with this case.

Now consider the binary alphabet $\{0,1\}$. The infinite word $u=(001101)^\omega$ has exactly 8 palindromic factors: $0,1,00,11,010,101,0110,1001$. All of them appear in $u[1..9]$. Then the word $0^{l-8}u[1..n{-}l{+}8]$ of length $n$ has exactly $l$ palindromes for any $l=k,\ldots,n-1$: those of $u$ plus $0^3,\ldots,0^{l-6}$. Since the words of length $n$ and richness $n$ exist (for example, $0^n$), we get the desired result.

\section{A simple upper bound}

The aim of this section is to prove that $\E(n,k)=O(\sqrt{n})$ for any fixed $k$ and to show that the most part of palindromic factors in a word of length $n$ has the length close to $\log n$. The first two lemmas are straightforward.

\begin{lemma} \label{pal_m}
The number of distinct $k$-ary palindromes of length $m$ is $\Pal(k,m)=k^{\lceil m/2\rceil}$.
\end{lemma}

\begin{proof}
The mentioned quantity is the number of ways to choose the first $\lceil m/2\rceil$ letters of a word of length $m$. If this word is a palindrome, the remaining letters are determined uniquely.
\end{proof}

\begin{lemma} \label{pal_E}
The expected number of palindromic factors\footnote{Not necessarily distinct!} of length $m$ in a $k$-ary word of length $n$ is $\hat \E(n,k,m)=\frac{n-m+1}{k^{\lfloor m/2\rfloor}}$.
\end{lemma}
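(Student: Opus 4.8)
The plan is to apply linearity of expectation over the window positions, reducing the whole computation to a single probability: the chance that one fixed length-$m$ window is a palindrome.

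First I would fix $m \le n$ and introduce, for each starting position $i \in \{1,\ldots,n-m+1\}$, the indicator random variable $X_i$ that equals $1$ when the factor $w[i..i+m-1]$ is a palindrome and $0$ otherwise. The number of (not necessarily distinct) palindromic factors of length $m$ is then exactly $\sum_{i=1}^{n-m+1} X_i$, and by linearity of expectation
\[
\hat \E(n,k,m) = \sum_{i=1}^{n-m+1} \Pr[X_i = 1].
\]
The key point is that this step needs no independence among the $X_i$, so the fact that windows overlap causes no difficulty whatsoever.

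Second I would evaluate the common probability $\Pr[X_i=1]$. Since the letters of a uniformly random word are independent and uniform over the $k$-ary alphabet, any fixed window $w[i..i+m-1]$ is itself a uniformly random word of length $m$. Such a word is a palindrome exactly when it satisfies the $\lfloor m/2\rfloor$ matching conditions $w[i+j-1]=w[i+m-j]$ for $j=1,\ldots,\lfloor m/2\rfloor$ (the central letter, if $m$ is odd, being free); these conditions involve pairwise disjoint pairs of positions, each holds with probability $1/k$, and they are therefore independent, giving $\Pr[X_i=1]=k^{-\lfloor m/2\rfloor}$. Alternatively, this probability is immediate from Lemma~\ref{pal_m}: there are $\Pal(k,m)=k^{\lceil m/2\rceil}$ palindromes among the $k^m$ words of length $m$, and $k^{\lceil m/2\rceil}/k^m = k^{-\lfloor m/2\rfloor}$ because $\lceil m/2\rceil+\lfloor m/2\rfloor=m$.

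Combining the two steps yields $\hat \E(n,k,m)=(n-m+1)\cdot k^{-\lfloor m/2\rfloor}$, as claimed. I expect no genuine obstacle here; the only subtlety worth flagging is conceptual rather than technical, namely that we are counting \emph{occurrences} rather than distinct factors, so linearity of expectation applies cleanly and the statistical dependence between overlapping windows never enters the calculation.
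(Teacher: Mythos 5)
Your proposal is correct and follows essentially the same route as the paper: linearity of expectation over the $n-m+1$ windows, each being a palindrome with probability $k^{\lceil m/2\rceil}/k^m = k^{-\lfloor m/2\rfloor}$ by Lemma~\ref{pal_m}. The extra derivation via independent matching conditions and the explicit remark about overlapping windows are fine but add nothing beyond the paper's argument.
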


\begin{proof}
The probability for a $k$-ary word of length $m$ to be a palindrome is $\frac{k^{\lceil m/2\rceil}}{k^m}=\frac{1}{k^{\lfloor m/2\rfloor}}$ by Lemma~\ref{pal_m}. This probability obviously coincides with the expected number of palindromic factors of length $m$ in the fixed position of a word of length $n$. Now the lemma follows by the linearity of expectation, because a word of length $n$ has $n{-}m{+}1$ factors of length $m$.
\end{proof}

The following combinatorial lemma is used in the proof of Lemma~\ref{upper}.

\begin{lemma} \label{summ}
$\sum_{i=c}^\infty\frac{i+1}{k^i}=\frac{(c+1)k-c}{k^{c-1}(k-1)^2}$.
\end{lemma}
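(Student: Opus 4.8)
The plan is to reduce the claimed identity to the two most basic power series, $\sum_{i\ge0}x^i=\frac{1}{1-x}$ and its termwise derivative $\sum_{i\ge0}(i+1)x^i=\frac{1}{(1-x)^2}$, both valid for $|x|<1$. Since $k\ge2$ forces $x:=1/k\le1/2$, every series in sight converges absolutely, so all the manipulations below are justified. Rather than evaluate the left-hand side at $x=1/k$ from the start, I would first compute the generic sum $S(x)=\sum_{i=c}^\infty(i+1)x^i$ as a closed-form rational function of $x$, and only substitute $x=1/k$ at the very end.

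To obtain $S(x)$ I would shift the index by $c$, writing $i=j+c$:
$$S(x)=\sum_{j=0}^\infty (j+c+1)x^{j+c}=x^c\sum_{j=0}^\infty\bigl((j+1)+c\bigr)x^j.$$
Splitting the bracket gives two known sums,
$$S(x)=x^c\left(\sum_{j=0}^\infty(j+1)x^j+c\sum_{j=0}^\infty x^j\right)=x^c\left(\frac{1}{(1-x)^2}+\frac{c}{1-x}\right)=x^c\cdot\frac{1+c(1-x)}{(1-x)^2}.$$

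It then remains to substitute $x=1/k$, so that $1-x=(k-1)/k$ and $(1-x)^2=(k-1)^2/k^2$, and to simplify. Clearing the powers of $k$ in the numerator $k^2\bigl(1+c(1-1/k)\bigr)=k\bigl((c+1)k-c\bigr)$ and combining this with the factor $x^c=k^{-c}$ yields exactly $\frac{(c+1)k-c}{k^{c-1}(k-1)^2}$, as claimed. There is no genuine obstacle here: the computation is routine, and the only points requiring care are the index shift (keeping the extra constant term $c$ inside $\sum(j+c+1)x^j$) and the bookkeeping of the powers of $k$ during the final simplification. An alternative, purely finite route---multiplying the partial sum by $k$ and telescoping to set up a recurrence---would also work, but the differentiated-geometric-series argument above is the most direct.
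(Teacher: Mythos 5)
Your proof is correct and takes essentially the same route as the paper: both reduce the sum to the differentiated geometric series and substitute $x=1/k$ only at the end, arriving at the same intermediate rational function $\frac{(c+1)x^c-cx^{c+1}}{(1-x)^2}$. The only (cosmetic) difference is that you shift the index and split into $\sum(j+1)x^j$ and $c\sum x^j$, whereas the paper differentiates the tail $\frac{x^{c+1}}{1-x}$ directly.
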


\begin{proof}
The following sequence of transformations holds:
\begin{multline*}
\sum_{i=c}^\infty(i+1)x^i=\left(\sum_{i=c+1}^\infty x^i\right)'=\left(\frac{x^{c+1}}{1-x}\right)'\\
=\frac{(c+1)x^c(1-x)+x^{c+1}}{(1-x)^2}=\frac{(c+1)x^c-cx^{c+1}}{(1-x)^2}=\left[x=1/k\right]=\frac{(c+1)k-c}{k^{c-1}(k-1)^2}.
\end{multline*}
\end{proof}

In the rest of this section we prove the following upper bound on the expected palindromic richness. Some notions and formulas from the proof will be then used throughout the rest of the paper.

\begin{lemma} \label{upper}
For any fixed $k\ge 2$ one has $\E(n,k)\le\sqrt{n}(\sqrt{k}+O(1))$.
\end{lemma}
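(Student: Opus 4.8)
The plan is to bound, for each length $m$, the expected number of distinct palindromic factors of length $m$ by the smaller of two quantities furnished by the preceding lemmas, and then sum over $m$. For a fixed word $w$, the number $d_m(w)$ of distinct length-$m$ subpalindromes is at most the total number of palindromes of that length, $\Pal(k,m)=k^{\lceil m/2\rceil}$ (Lemma~\ref{pal_m}), and is also at most the number of length-$m$ palindromic occurrences in $w$ (distinct factors cannot exceed occurrences counted with multiplicity). Since $\min(a,X)\le a$ and $\min(a,X)\le X$ pointwise, taking expectations yields $\E[d_m]\le\min(\Pal(k,m),\hat\E(n,k,m))$, where $\hat\E(n,k,m)=\frac{n-m+1}{k^{\lfloor m/2\rfloor}}$ by Lemma~\ref{pal_E}. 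Linearity of expectation then gives
\[
\E(n,k)\le\sum_{m=1}^{n}\min\Bigl(k^{\lceil m/2\rceil},\ \tfrac{n-m+1}{k^{\lfloor m/2\rfloor}}\Bigr).
\]

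The two arguments of the minimum run oppositely in $m$: the first grows like $k^{m/2}$, the second decays like $n/k^{m/2}$, and they cross near $m^\ast=\log n$, where each is of order $\sqrt{n}$. The decisive point is that this crossover is parity-sensitive. Splitting $m=2j$ from $m=2j+1$, the even terms $\min(k^{j},\frac{n-2j+1}{k^{j}})$ balance at $k^{2j}\approx n$, where both sides are $\approx\sqrt{n}$; but the odd terms $\min(k^{j+1},\frac{n-2j}{k^{j}})$ balance at $k^{2j+1}\approx n$, where both sides are $\approx\sqrt{kn}$. Thus it is precisely the odd lengths around the crossover that create the factor $\sqrt{k}$, reflecting the even/odd asymmetry announced in the abstract.

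First I would treat each parity as two geometric sums, split at its own balance point: below it, bound $\min$ by the increasing term $k^{\lceil m/2\rceil}$; above it, by the decaying term $\frac{n-m+1}{k^{\lfloor m/2\rfloor}}$. Each increasing part is a geometric progression of ratio $k$ in $j$, hence at most the $\frac{k}{k-1}$-multiple of its largest (crossover) term; each decaying tail is evaluated by Lemma~\ref{summ}, which disposes of both the leading $n$ and the linear-in-$j$ correction and again returns a bounded multiple of its largest term (using $n-m+1\le n$). Collecting the four dominant contributions, the even-length sum is $O(\sqrt{n})$, while the odd-length sum has leading term $\frac{\sqrt{k}\,(k+1)}{k-1}\,\sqrt{n}=(\sqrt{k}+O(1))\sqrt{n}$ coming from its increasing part; adding them gives $\E(n,k)\le\sqrt{n}(\sqrt{k}+O(1))$ with the $O(1)$ absolute (independent of $k$), as needed for consistency with part~(4) of Theorem~\ref{main}.

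The main obstacle is bookkeeping at the crossover rather than any single delicate estimate. The balance point $m^\ast$ is generally not an integer and differs between the two parities, so I must place every term on the correct side of \emph{its} crossover and identify the dominant term of each geometric piece with the right leading constant. Keeping the factor $n-m+1$ under control (simply bounding it by $n$ in the tails and checking this costs only a lower-order amount) and verifying that all the $O(1)\cdot\sqrt{n}$ remainders — summed over both parities and over the sub- and super-crossover regions — stay within the stated $O(\sqrt{n})$, uniformly in $k$, is the part demanding the most care.
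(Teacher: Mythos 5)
Your proposal follows essentially the same route as the paper: the same pointwise bound $\min\{\Pal(k,m),\hat\E(n,k,m)\}$, the same even/odd split with parity-dependent crossover points, and the same geometric-sum evaluation (increasing part as a geometric progression, decaying tail via Lemma~\ref{summ}), arriving at the paper's worst-case leading constant $\frac{\sqrt{k}(k+1)}{k-1}$ for the odd lengths. The one point to watch in executing it is that the worst case of the increasing part and the worst case of the tail are coupled through the fractional part of the crossover point (the paper's $k^{1-\{p_o\}}+k^{\{p_o\}}\le k+1$): bounding each piece independently by its own worst case would yield $2\sqrt{k}+O(1)$ rather than $\sqrt{k}+O(1)$ with a $k$-uniform $O(1)$-term, so the factor $k+1$ is the joint worst case of the two pieces, not a contribution of the increasing part alone.
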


\begin{proof}
Let $w$ be a word picked up uniformly at random from the set of all $k$-ary words of length $n$. It is clear that the expected number $\E(w,m)$ of distinct palindromic factors of length $m$ in $w$ can exceed neither $\Pal(k,m)$ nor $\hat \E(n,k,m)$. So we have the following upper bound:
\begin{equation} \label{upp}
\E(n,k)\le\sum_{m=0}^n\min\{\Pal(k,m),\hat \E(n,k,m)\}.
\end{equation}
Since the formulas given in Lemmas~\ref{pal_m} and~\ref{pal_E} are asymmetric with respect to the parity of $m$, it is convenient to split the sum in \eqref{upp} into two sums, corresponding to even and odd values of $m$, respectively, and compute them separately. So we have
\begin{subequations}
\begin{align}
\Pal(k,2m)=k^m,\quad&\hat \E(n,k,2m)=\frac{n-2m+1}{k^m},\label{upp_even}\\
\Pal(k,2m{+}1)=k^{m+1},\quad&\hat \E(n,k,2m{+}1)=\frac{n-2m}{k^m},\label{upp_odd}
\end{align}
\end{subequations}
and then we can write
\begin{multline} \label{upp2}
\E(n,k)=\E_e(n,k)+\E_o(n,k)\\
\le \sum_{m=0}^{\lfloor n/2\rfloor}\min\{\Pal(k,2m),\hat \E(n,k,2m)\} + \!\!\!\!\sum_{m=0}^{\lfloor (n-1)/2\rfloor}\min\{\Pal(k,2m{+}1),\hat \E(n,k,2m{+}1)\}.
\end{multline}
The graphs of \eqref{upp_even} and \eqref{upp_odd} as functions of $m$ (for $k$ and $n$ fixed) are drawn in Fig.~\ref{graphs}. 

\begin{figure}[htb]
\centerline{ 
\unitlength=1mm
\begin{picture}(66,60)(0,1)
\gasset{Nw=0,Nh=0,AHangle=15,AHLength=2.5,ExtNL=y,NLdist=1}
\node(o)(0,0){}
\node[NLangle=60](x)(55,0){${m}$}
\node[NLangle=340](y)(0,60){}
\node[Nw=1,Nh=1,Nfill=y](p)(25,34.8){}
\node[NLangle=45](p1)(25,0){$p_e$}
\drawedge(o,x){}
\drawedge(o,y){}
\drawedge[AHnb=0,dash={1 1}{0}](p,p1){}
\put(30,55){\makebox(0,0)[lt]{$k^m$}}
\put(37,8){\makebox(0,0)[lb]{$\frac{n-2m+1}{k^m}$}}
\drawcbezier[AHnb=0,linewidth=0.2](0,2,15,2.5,26,28,30,60)
\drawcbezier[AHnb=0,linewidth=0.2](20,60,24,28,35,1.5,50,0)
\end{picture} 
 \begin{picture}(57,60)(0,1)
\gasset{Nw=0,Nh=0,AHangle=15,AHLength=2.5,ExtNL=y,NLdist=1}
\node(o)(0,0){}
\node[NLangle=60](x)(55,0){${m}$}
\node[NLangle=340](y)(0,60){}
\node[Nw=1,Nh=1,Nfill=y](p)(22.5,44.8){}
\node[NLangle=45](p1)(22.5,0){$p_o$}
\drawedge(o,x){}
\drawedge(o,y){}
\drawedge[AHnb=0,dash={1 1}{0}](p,p1){}
\put(25,55){\makebox(0,0)[lt]{$k^{m+1}$}}
\put(37,8){\makebox(0,0)[lb]{$\frac{n-2m}{k^m}$}}
\drawcbezier[AHnb=0,linewidth=0.2](0,3.5,10,5,21,27,25,60)
\drawcbezier[AHnb=0,linewidth=0.2](20,60,24,28,35,1.5,50,0)
\end{picture} 
}
\caption{The graphs of $\Pal$ and $\hat\E$ for even-length (left) and odd-length (right) palindromes.} \label{graphs}
\vspace*{-2mm}
\end{figure}
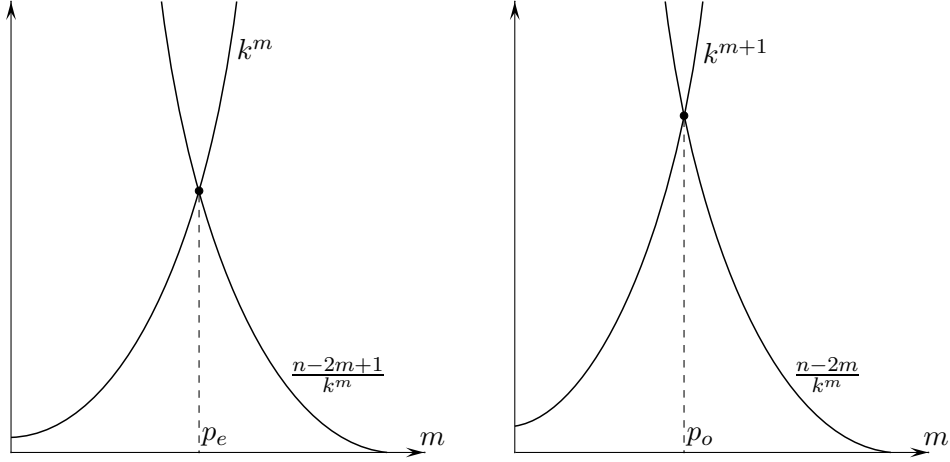
So, in each case we have to find the point of intersection of two graphs and then sum up all values of $\Pal$ to the left of this point and all values of $\hat\E$ to the right of this point. We start with even-length palindromes. Recall that $\log$ denotes the base $k$ logarithm.

The intersection point $p_e=p_e(n,k)$ is given by the equation $k^{2m}=n-2m+1$, so $p_e\approx\frac{\log n}2$. Using standard transformations and the Maclaurin series for $\ln(1-x)$, we get a more precise estimate:
\begin{multline} \label{pe}
p_e=\frac{\log(n-2p_e+1)}2=\frac{\log(n-\log(n-2p_e+1)+1)}2\\
=\frac 12\cdot\Big(\log n+\log\Big(1-\frac{\log(n-2p_e+1)-1}{n}\Big)\Big)=\frac{\log n}2-\frac{\log(n-2p_e+1)-1}{(2\ln k)\cdot n}+O\Big(\frac{\log^2n}{n^2}\Big)\\
=\frac{\log n}2-\frac{\log n-1}{(2\ln k)\cdot n}+O\Big(\frac{\log^2n}{n^2}\Big)\,.
\end{multline}
Replacing geometric sequences by geometric series and applying Lemma~\ref{summ}, we obtain
\begin{multline} \label{even1}
\E_e(n,k)\le \sum_{m=0}^{\lfloor p_e\rfloor}k^m+\!\!\!\!\sum_{m=\lfloor p_e\rfloor{+}1}^{\lfloor n/2\rfloor}\!\!\!\!\frac{n-2m+1}{k^m}\le
\frac{k^{\lfloor p_e\rfloor}}{1-1/k}+\frac{n+1}{k^{\lfloor p_e\rfloor+1}(1-1/k)}- \frac2k\cdot\!\!\sum_{m=\lfloor p_e\rfloor}^{\infty}\frac{m+1}{k^m}\\
=\frac{k^{\lfloor p_e\rfloor+1}}{k-1}+\frac{n+1}{k^{\lfloor p_e\rfloor}(k-1)}-
\frac{2(\lfloor p_e\rfloor+1)k-2\lfloor p_e\rfloor}{k^{\lfloor p_e\rfloor}(k-1)^2}\,.
\end{multline}
Using \eqref{pe} and the Maclaurin series for the exponential function, we compute
\begin{equation} \label{kpe}
k^{\lfloor p_e\rfloor}=\frac{k^{p_e}}{k^{\{p_e\}}}= \frac{\sqrt{n}\cdot k^{-\frac{\log n-1}{(2\ln k)\cdot n}+ O(\frac{\log^2n}{n^2})}}{k^{\{p_e\}}}= \frac{\sqrt{n}\cdot\big(1-\frac{\log n-1}{2n}+ O(\frac{\log^2n}{n^2})\big)}{k^{\{p_e\}}}.
\end{equation}
Substituting \eqref{kpe} and \eqref{pe} into \eqref{even1}, we finally obtain
\begin{equation} \label{even2}
\E_e(n,k)\le\frac{\sqrt{n}\cdot k^{1-\{p_e\}}}{k-1}+\frac{\sqrt{n}\cdot k^{\{p_e\}}}{k-1}+O\Big(\frac{\log n}{\sqrt{n}}\Big)\,.
\end{equation}
Note that the constant inside the $O$-term can be chosen independent of $k$. Now we proceed with the odd-length palindromes. The following property of the intersection point $p_o=p_o(n,k)$ is quite useful.

\begin{lemma} \label{le:pepo}
$p_e=p_o+1/2$.
\end{lemma}

\begin{proof}
Recall that $p_o$ is the root of the equation $k^{2p_o+1}=n-2p_o$, so $p_o=\log\sqrt{n-2p_o}-1/2$. Similarly, $p_e=\log\sqrt{n-2p_e+1}$. Then
\begin{equation} \label{pepo}
p_e-p_o=\frac 12+\log\sqrt{\frac{n-2p_e+1}{n-2p_o}} \,.
\end{equation}
Denoting the logarithm in \eqref{pepo} by $\Delta$, we obtain
\begin{equation} \label{delta}
\Delta=\log\sqrt{\frac{n-2p_o-1-2\Delta+1}{n-2p_o}}=\log\sqrt{1-\frac{2\Delta}{n-2p_o}} \,.
\end{equation}
If $\Delta>0$, then the square root in \eqref{delta} is less than 1, implying $\Delta<0$. Similarly,  if $\Delta<0$, then the square root in \eqref{delta} is greater than 1, implying $\Delta>0$. These contradictions show that the only possible case is $\Delta=0$, whence the result.
\end{proof}

Lemma~\ref{le:pepo} and \eqref{pe} give us 
\begin{equation} \label{po}
p_o=\frac{\log n - 1}2-\frac{\log n-1}{(2\ln k)\cdot n}+O\Big(\frac{\log^2n}{n^2}\Big)\,.
\end{equation}
Similar to the even case we obtain
\begin{multline} \label{odd1}
\E_o(n,k)=\sum_{m=0}^{\lfloor p_o\rfloor}k^{m+1}+\!\!\!\!\sum_{m=\lfloor p_o\rfloor{+}1}^{\lfloor (n{-}1)/2\rfloor}\!\!\!\frac{n-2m}{k^m}\le
\frac{k^{\lfloor p_o\rfloor+1}}{1-1/k}+\frac{n}{k^{\lfloor p_o\rfloor+1}(1-1/k)}- \frac2k\cdot\!\!\sum_{m=\lfloor p_o\rfloor}^{\infty}\frac{m+1}{k^m}\\
=\frac{k^{\lfloor p_o\rfloor+2}}{k-1}+\frac{n}{k^{\lfloor p_o\rfloor}(k-1)}-
\frac{2(\lfloor p_o\rfloor+1)k-2\lfloor p_o\rfloor}{k^{\lfloor p_o\rfloor}(k-1)^2}\,.
\end{multline}
From \eqref{kpe} and Lemma~\ref{le:pepo} we have
\begin{equation} \label{kpo}
k^{\lfloor p_o\rfloor}=\frac{\sqrt{n}\cdot\big(1-\frac{\log n-1}{2n}+ O(\frac{\log^2n}{n^2})\big)}{k^{\{p_o\}+1/2}}\,.
\end{equation}
Substituting \eqref{kpo} and \eqref{po} into \eqref{odd1}, we finally get
\begin{equation} \label{odd2}
\E_o(n,k)\le\frac{\sqrt{n}\cdot k^{3/2-\{p_o\}}}{k-1}+\frac{\sqrt{n}\cdot k^{1/2+\{p_o\}}}{k-1}+O\Big(\frac{\log n\cdot\sqrt{k}}{\sqrt{n}}\Big)\,,
\end{equation}
and from \eqref{even2} and \eqref{odd2}
\begin{equation} \label{e1}
\E(n,k)\le \frac{\sqrt{n}\cdot \big(\sqrt{k}\cdot(k^{1-\{p_o\}}+k^{\{p_o\}}) +(k^{1-\{p_e\}}+k^{\{p_e\}})\big)}{k-1}+
O\Big(\frac{\log n\cdot\sqrt{k}}{\sqrt{n}}\Big)\,,
\end{equation}
whence the result.
\end{proof}

\begin{remark} \label{antiphase}
According to Lemma~\ref{le:pepo}, the expressions in internal parentheses in \eqref{e1} oscillate in antiphase. So, if $\{p_o\}\approx0$ (i.e., $n$ is slightly bigger than an odd power of $k$), the bound \eqref{e1} approaches its maximum and approximates to $\sqrt{n}(\sqrt{k}+\frac{4\sqrt{k}}{k-1})$, and if $\{p_e\}\approx0$ (i.e., $n$ is slightly bigger than an even power of $k$), this bound goes to minimum values close to $\sqrt{n}(3+\frac{4}{k-1})$. 
\end{remark}

The given upper bounds leave an impression that for any fixed $k$ the function $\E(n,k)$ oscillates between its low values close to $C\sqrt{n}$ for some absolute constant $C$ and its high values close to $D\sqrt{nk}$ for some absolute constant $D$. But the bound \eqref{e1} is somewhat imprecise, because the initial bound \eqref{upp} is generous enough. Indeed, if the number of palindromic factors of length $m$ in a word is greater than the number of distinct palindromes of this length, still some palindromes of length $m$ can be missing from this word. Similarly, if the number of these factors of length $m$ in a word is less than the number of distinct palindromes of this length, some of the factors can repeat, decreasing the number of distinct palindromes. Since the probability of an event ``to contain a given palindrome of length $m$'' depends not only on $n,k$, and $m$, but also on the internal structure of the palindrome, we cannot obtain a lower bound on the expected number of palindromic factors just using standard balls-and-bins considerations. Instead, we use a more powerful technique. This technique is based on the asymptotic estimates of the number of words of length $n$ avoiding a given fixed factor.

\section{Lower bound through avoidance of factors}

Below we assume that a $k$-ary alphabet $\Sigma$ is fixed, $k\ge 2$, all words are over $\Sigma$, and $\pal$ is the set of all palindromes over $\Sigma$. We say that a word $u$ \emph{avoids} a word $w$ if $w$ is not a factor of $u$. Let $A_w(n)$ be the number of words of length $n$ avoiding the word $w$ and let $\E(n,k,m)$ be the expected number of distinct palindromes of length $m$ in the words of length $n$. 

\begin{lemma}
\begin{equation} \label{Enkm}
\E(n,k,m)=\sum_{\substack{|w|=m,\\w\in\pal}} \Big(1-\frac{A_w(n)}{k^n}\Big).
\end{equation}
\end{lemma}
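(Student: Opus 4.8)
The plan is to compute $\E(n,k,m)$ via the linearity of expectation applied not over positions in the word, but over the set of distinct palindromes of length $m$. For each fixed palindrome $w \in \pal$ with $|w| = m$, introduce the indicator random variable $X_w$ that equals $1$ if $w$ occurs as a factor of the random word of length $n$, and $0$ otherwise. By definition, the palindromic richness restricted to length $m$ is exactly the number of distinct palindromes of length $m$ that actually occur, which is $\sum_{|w|=m,\,w\in\pal} X_w$. Taking expectations and using linearity then gives $\E(n,k,m) = \sum_{|w|=m,\,w\in\pal} \mathbb{E}[X_w]$.

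The key remaining step is to identify $\mathbb{E}[X_w]$ with the summand $1 - A_w(n)/k^n$. Since $X_w$ is an indicator, $\mathbb{E}[X_w]$ equals the probability that $w$ occurs in the random word. I would compute this complementary to the event that $w$ does \emph{not} occur, i.e.\ that the random word avoids $w$. The number of length-$n$ words avoiding $w$ is precisely $A_w(n)$ by definition, and each of the $k^n$ words is equally likely, so the probability of avoidance is $A_w(n)/k^n$. Hence $\mathbb{E}[X_w] = 1 - A_w(n)/k^n$, and summing over all palindromes $w$ of length $m$ yields the claimed formula~\eqref{Enkm}.

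I expect no genuine obstacle here; the statement is essentially a reformulation of the definitions combined with linearity of expectation, and it is flagged earlier in the excerpt as one of the ``straightforward'' bookkeeping lemmas. The only point requiring a word of care is the observation that $X_w$ is a \emph{distinctness}-counting indicator rather than an occurrence count: it registers a palindrome at most once regardless of how many times $w$ appears, which is exactly what the notion of palindromic richness demands, and it is why we sum over distinct palindromes rather than over positions (in contrast to the expected \emph{number of occurrences} computed in Lemma~\ref{pal_E}). This distinction is the whole reason the factor-avoidance probabilities $A_w(n)/k^n$ — rather than naive balls-and-bins estimates — are the right tool, as anticipated in the discussion closing Section~3.
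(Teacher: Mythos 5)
Your proposal is correct and follows exactly the paper's argument: indicator variables for the containment of each fixed palindrome $w$ of length $m$, linearity of expectation over the set of distinct palindromes, and the identification of $\mathbb{E}[X_w]$ with $1-A_w(n)/k^n$ via the uniform distribution and the definition of $A_w(n)$. No issues.
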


\begin{proof}
Consider the function on words that equals 1 if a word contains a given length $m$ palindrome $w$ and 0 otherwise. Applied to a random word, this function becomes a random variable with the expectation $\big(1-\frac{A_w(n)}{k^n}\big)$. This expectation is exactly the probability for a random word of length $n$ to contain $w$. Clearly, by the linearity of expectation, $\E(n,k,m)$ is the sum of such expectations over all palindromes of length $m$.
\end{proof}

To make use of \eqref{Enkm} for the estimation of $\E(n,k)=\sum_{m=1}^n \E(n,k,m)$, we have to estimate the number of words avoiding a given palindrome. For this purpose, we use the technique developed by Guibas and Odlyzko in \cite{GuOd78,GuOd81}. To formulate some of their results, we need to introduce some important notions. Recall that a word $u$ is a \emph{border} of a word $w$ if $u$ is both a prefix and a suffix\footnote{This definition deviates slightly from the usual one, which excludes the trivial case $u=w$.} of $w$. With each word $w$ of length $m$ we associate its \emph{border array}, which is a word $\hat{w}[1..m]$ over $\{0,1\}$ such that $w[i]=1$ if and only if $w$ has a border of length $m{-}i{+}1$. The border array can be interpreted as the array of coefficients of a real-valued polynomial $f_w(x)$ such that $\hat{w}[i]$ is the coefficient of $x^{m-i}$. We refer to this polynomial with 0-1 coefficients as the \emph{border polynomial} of $w$. Since $\hat{w}[1]=1$, this polynomial has degree $m{-}1$.

\begin{example}
The word $w=aabaabaa$ has non-empty borders $w$, $aabaa$, $aa$, and $a$. Its border array $\hat w$ equals $10010011$ and its border polynomial is $f_w(x)=x^7+x^4+x+1$.
\end{example}

\begin{theorem}[{\cite{GuOd78,GuOd81}}] \label{the:guod}
1) The number $A_w(n)$ of words of length $n$ avoiding a given word $w$ of length $m>3$ is
\begin{gather}
A_w(n)= C_w\theta_w^n+O(1.7^n), \label{eq:Awn}\\
 \text{ where }\theta_w= k-\frac 1{f_w(k)}-\frac{f'_w(k)}{f_w^3(k)}-O\Big(\frac{m^2}{k^{3m}}\Big),\quad
C_w= \frac{1}{1-(k-\theta)^2f'_w(\theta)}\,. \label{eq:thwCw}
\end{gather}
2) The condition $f_u(k)<f_w(k)$ implies $A_u(n)\le A_w(n)$ for all $n\ge 0$ and, in particular, $\theta_u\le \theta_w$.
\end{theorem}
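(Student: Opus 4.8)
The plan is to recover both formulas analytically, by passing to the generating function of avoiding words and then applying singularity analysis, and to obtain the comparison in part~2 by a direct generating-function difference. First I would introduce the \emph{autocorrelation polynomial} $c_w(z)=\sum_{\ell}z^{m-\ell}$, the sum taken over all border lengths $\ell$ of $w$; it is the reciprocal transform of the border polynomial, $c_w(z)=z^{m-1}f_w(1/z)$. Following Guibas and Odlyzko, I would relate the generating function $A(z)=\sum_n A_w(n)z^n$ to the generating function $U(z)$ of words whose unique occurrence of $w$ is a suffix, through the letter-extension identity $1+kzA(z)=A(z)+U(z)$ and the correlation identity $z^mA(z)=U(z)c_w(z)$. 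Eliminating $U(z)$ yields the closed form
\begin{equation*}
A(z)=\frac{c_w(z)}{z^m+(1-kz)c_w(z)}.
\end{equation*}

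For part~1 I would locate the dominant singularity of $A(z)$. Writing $D(z)=z^m+(1-kz)c_w(z)$ and substituting $\theta=1/z$, the equation $D(z)=0$ becomes, after clearing powers of $\theta$, the transparent $(k-\theta)f_w(\theta)=1$. Since $f_w(\theta)\ge\theta^{m-1}$ is large, the relevant root $\theta_w$ sits just below $k$; setting $\theta=k-\varepsilon$ and iterating $\varepsilon=1/f_w(k-\varepsilon)$ with the Taylor expansion of $f_w$ at $k$ produces $\theta_w=k-\frac1{f_w(k)}-\frac{f'_w(k)}{f_w^3(k)}-O(m^2/k^{3m})$, the stated expansion. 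A simple-pole extraction then gives $A_w(n)=C_w\theta_w^n+(\text{subdominant})$ with $C_w=-c_w(\rho)/(\rho D'(\rho))$ at $\rho=1/\theta_w$; rewriting $D'$ through $g(\theta)=1+(\theta-k)f_w(\theta)$, so that $g'(\theta_w)=f_w(\theta_w)+(\theta_w-k)f'_w(\theta_w)$, and using $(k-\theta_w)f_w(\theta_w)=1$ collapses this to
\begin{equation*}
C_w=\frac{f_w(\theta_w)}{g'(\theta_w)}=\frac{1}{1-(k-\theta_w)^2f'_w(\theta_w)},
\end{equation*}
as claimed. To turn ``subdominant'' into the explicit $O(1.7^n)$ I would show, by a Rouch\'e argument on the circle $|\theta|=1.7$, that all remaining roots of the monic degree-$m$ polynomial $g$ lie inside it (so the other poles of $A(z)$ have modulus $\ge1/1.7$), using that the $0$--$1$ polynomial $f_w$ has all its zeros of modulus below $1.7$ and that $|(k-\theta)f_w(\theta)|>1$ on the circle. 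The hypothesis $m>3$ enters precisely here: it forces $f_w(k)\ge k^{m-1}\ge 8$, hence $\theta_w\ge k-\tfrac18-\dots>1.7$, which makes $\theta_w$ the unique root outside the circle and separates the main term from the error.

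For part~2 I would compare the two generating functions directly. For patterns $u,w$ of lengths $m_u,m_w$ the $(1-kz)$ contributions cancel and one gets
\begin{equation*}
A_w(z)-A_u(z)=\frac{z^{m_u}c_w(z)-z^{m_w}c_u(z)}{D_u(z)D_w(z)}.
\end{equation*}
When $u,w$ have equal length the numerator is $z^m(c_w-c_u)$, so coefficientwise domination of the correlations would immediately give $A_u(n)\le A_w(n)$ for all $n$, and the growth-rate inequality $\theta_u\le\theta_w$ would follow at once by taking $n$-th roots and letting $n\to\infty$. The difficulty — and what I expect to be the main obstacle — is that the hypothesis $f_u(k)<f_w(k)$ compares the border polynomials only at the \emph{single} point $x=k$, a total order that need not coincide with coefficientwise domination of $c_w$ over $c_u$; moreover $1/(D_uD_w)$ is not manifestly sign-definite. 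I would therefore close part~2 by the finer Guibas--Odlyzko analysis: establishing that $A_w(n)$ is, \emph{uniformly in $n$}, a monotone function of the scalar $f_w(k)$, by controlling the numerator together with the dominant root and the $O(1.7^n)$ remainder of part~1, and bootstrapping this to all $n\ge0$ through the linear recurrence encoded by $D_w$. This uniform-in-$n$ monotonicity under a pointwise hypothesis is considerably more delicate than the clean asymptotics of part~1, and is where I would expect to spend most of the effort.
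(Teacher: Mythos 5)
First, note that the paper does not prove this statement at all: it is imported verbatim from Guibas and Odlyzko, so your reconstruction can only be compared against their original arguments. For part~1 your outline is correct and essentially the standard one: the two correlation identities, the closed form $A(z)=c_w(z)/\bigl(z^m+(1-kz)c_w(z)\bigr)$, the substitution $\theta=1/z$ turning the denominator into $1+(\theta-k)f_w(\theta)$, the two-step iteration $\varepsilon=1/f_w(k-\varepsilon)$ giving the stated expansion of $\theta_w$, and the residue computation collapsing to $C_w=1/\bigl(1-(k-\theta_w)^2f'_w(\theta_w)\bigr)$ all check out (I verified the algebra, including $D'(\rho)=-\rho^{m-2}g'(\theta_w)$). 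The use of $m>3$ to force $\theta_w>1.7$ and isolate the dominant root is also the right mechanism for the $O(1.7^n)$ error term, though the Rouch\'e step on $|\theta|=1.7$ is only sketched and the claim that all zeros of an arbitrary $0$--$1$ border polynomial have modulus below $1.7$ would need the finer structural facts about correlation polynomials that Guibas and Odlyzko actually prove (the elementary bound for $0$--$1$ polynomials only gives modulus $<2$).

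Part~2, however, contains a genuine gap, and you have in effect named it yourself. You correctly derive $A_w(z)-A_u(z)=z^m(c_w(z)-c_u(z))/(D_u(z)D_w(z))$ and correctly observe that the hypothesis $f_u(k)<f_w(k)$ --- which, since $f_u,f_w$ are $0$--$1$ polynomials, is just the lexicographic comparison of border arrays used in Lemma~\ref{le1} --- does not imply coefficientwise domination of the correlation polynomials, nor is $1/(D_uD_w)$ coefficientwise nonnegative. But the proposed repair, ``establishing that $A_w(n)$ is uniformly in $n$ a monotone function of the scalar $f_w(k)$ \dots\ and bootstrapping this to all $n\ge0$ through the linear recurrence,'' is a restatement of the conclusion, not an argument: no mechanism is given by which the single-point inequality at $x=k$ propagates to an inequality between the coefficient sequences for every $n$. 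This monotonicity is the genuinely hard half of the cited theorem (it is a separate result in the 1981 Guibas--Odlyzko paper with its own nontrivial proof), and it is also the half the present paper actually leans on, via \eqref{Awn} and the bracketing of $f_w$ between $x^{m-1}+1$ and $x^{m-1}+x^{\lfloor\log m\rfloor}$ in Section~5. The weaker consequence $\theta_u\le\theta_w$ would indeed follow from the $n$-wise inequality together with part~1 as you say, but as it stands the $n$-wise inequality itself is asserted rather than proved.
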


\begin{lemma} \label{le1}
1) For words $u$ and $w$, one has $f_u(k)<f_w(k)$ if and only if $\hat{u}<\hat{w}$, where  $\hat{u}$ and $\hat{w}$ are treated as binary numbers.\\
2) For any $m$, $\max_{|w|=m} \theta_w=\theta_{a^m}$. 
\end{lemma}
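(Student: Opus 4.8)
The plan is to read the value of the border polynomial as a number written in base $k$ with $0/1$ digits, and to exploit the order-isomorphism between this evaluation and the ordinary binary valuation. Since $f_w(x)=\sum_{i=1}^{m}\hat w[i]\,x^{m-i}$ with $\hat w[1]=1$, the quantity $f_w(k)$ is exactly the integer whose base-$k$ digit string is $\hat w[1]\cdots\hat w[m]$, the leading digit being $1$; in particular, for $k=2$ the value $f_w(2)$ is literally the binary number $\hat w$, so Part~1 is immediate in that case.

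For general $k\ge 2$ I would first settle Part~1 in the equal-length case $|u|=|w|=m$. Letting $j$ be the most significant position at which $\hat u$ and $\hat w$ differ, and assuming $\hat u[j]=0$, $\hat w[j]=1$ (the binary ordering $\hat u<\hat w$), I would write
\[
f_w(k)-f_u(k)=k^{m-j}+\sum_{i>j}(\hat w[i]-\hat u[i])k^{m-i},
\]
and bound the tail in absolute value by $\sum_{i>j}k^{m-i}=\frac{k^{m-j}-1}{k-1}\le k^{m-j}-1<k^{m-j}$, which is valid precisely because $k\ge 2$. Hence the sign of $f_w(k)-f_u(k)$ agrees with the binary comparison, giving the equivalence. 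For the remaining case $|u|<|w|$, I would use the length-confinement estimate $k^{\ell-1}\le f_s(k)<2k^{\ell-1}$ for every $0/1$-string $s$ of length $\ell$ with leading digit $1$ (again from the geometric bound and $k\ge2$): this forces $f_u(k)<2k^{|u|-1}\le k^{|u|}\le k^{|w|-1}\le f_w(k)$, while the analogous intervals $2^{\ell-1}\le\hat s<2^{\ell}$ for the binary valuation show $\hat u<\hat w$ as binary numbers. Thus both orderings sort strings first by length and then lexicographically, and Part~1 follows.

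For Part~2 I would note that the border array of any length-$m$ word is a $0/1$-string of length $m$ with leading digit $1$, so as a binary number it is at most $1^m$ (the all-ones string), with equality forcing a border of length $m{-}1$, i.e. $w[1..m{-}1]=w[2..m]$; this holds exactly for the constant words $w=a^m$, whose border array is indeed $1^m$. By Part~1 this gives $f_w(k)\le f_{a^m}(k)$ for every $|w|=m$, with strict inequality unless $w$ is constant. Feeding $f_w(k)\le f_{a^m}(k)$ into Theorem~\ref{the:guod}(2) yields $\theta_w\le\theta_{a^m}$ for all such $w$, whence $\max_{|w|=m}\theta_w=\theta_{a^m}$.

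The routine parts are the two geometric-series estimates; the only point demanding genuine care is the comparison across different lengths in Part~1, where one must verify that the base-$k$ valuation keeps the values of strings of distinct lengths in disjoint intervals. This is exactly what the hypothesis $k\ge2$ guarantees, and it is what makes the ``length first, then lexicographic'' description of the ordering correct.
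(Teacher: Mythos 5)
Your proof is correct and follows essentially the same route as the paper: Part~1 rests on the observation that $f_w(k)$ is the integer with $k$-ary digit string $\hat w$, so that binary and $k$-ary comparisons of $0/1$ strings agree, and Part~2 combines this with the fact that $\widehat{a^m}=1^m$ is the largest $m$-bit border array and with Theorem~\ref{the:guod}(2). You merely spell out the digit-comparison and cross-length estimates that the paper treats as self-evident.
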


\begin{proof}
1) The comparison of $\hat{u}$ and $\hat{w}$ as binary numbers has the same result as the comparison of them as $k$-ary numbers; but the number having $\hat{w}$ as its $k$-ary notation is exactly $f_w(k)$ by the definition of $f_w(x)$.

2) The border array of $a^m$ equals $1^m$ and thus represents the biggest number that can be written in binary in $m$ bits. Now the statement follows from statement 1 and  Theorem~\ref{the:guod}(2). 
\end{proof}


Applying Lemma~\ref{le1} and Theorem~\ref{the:guod}(2),  we see that
\begin{equation} \label{Awn}
A_{w}(n)\le A_{a^m}(n)\quad \text{ for any palindrome } w \text{ of length } m.
\end{equation}
Thus we can get the lower bound on the expected number of palindromic factors replacing $w$ in \eqref{Enkm} with the word $v=a^m$.
We have $f_v(x)=x^{m-1}+x^{m-2}+\cdots+x+1=(x^m-1)/(x-1)$, as we can assume $x>1$ since $k,\theta_v>1$. Hence,
\begin{equation}
f'_v(x)=\Big(\frac{x^m-1}{x-1}\Big)'=\frac{mx^{m-1}(x-1)-x^m+1}{(x-1)^2}=\frac{(m-1)x^m-mx^{m-1}+1}{(x-1)^2}\,.
\end{equation}
Substituting these formulas into \eqref{eq:thwCw} and performing straightforward transformations, we get
\begin{multline} \label{thetaam}
\theta_v=k-\frac{k-1}{k^m-1}-\frac{(k-1)\big((m-1)k^m-mk^{m-1}+1\big)}{(k^m-1)^3}+O\Big(\frac{m^2}{k^{3m}}\Big)\\
=k-\Big[\frac{k-1}{k^m}+\frac{k-1}{k^{2m}}+O\Big(\frac 1{k^{3m-1}}\Big)\Big]-\frac{(k^m+3)(k-1)\big((m-1)k^m-mk^{m-1}+1\big)}{k^{4m}}+O\Big(\frac{m^2}{k^{3m}}\Big)\\
=k-\frac{k-1}{k^m}-\frac{k-1}{k^{2m}}-\frac{m-1}{k^{2m-1}}+\frac{2m-1}{k^{2m}}-\frac{m}{k^{2m+1}}+O\Big(\frac{km+m^2}{k^{3m}}\Big)\\
=k-\frac{k-1}{k^m}-\frac{m(k-1)^2}{k^{2m+1}}+O\Big(\frac{km+m^2}{k^{3m}}\Big)\,,
\end{multline}
\begin{multline} \label{Cam}
C_v= \frac{1}{1-(k-\theta_v)^2f'_v(\theta)}=1+(k-\theta_v)^2f'_v(\theta)+O\big((k-\theta_v)^4{f'_v}^2(\theta)\big)\\
= 1+ \Big(\frac{k-1}{k^m}+O\Big(\frac{m}{k^{2m-1}}\Big)\Big)^2\Big( \frac{(m-1)\theta_v^m-m\theta_v^{m-1}+1}{(\theta_v-1)^2} \Big) +O\Big(\frac{m^2}{k^{2m}}\Big)
= 1+ O\Big(\frac{m}{k^{m}}\Big)
\end{multline}
Now we use \eqref{eq:Awn} to estimate the sum in \eqref{Enkm}. 

Since our goal is to estimate the ratio $\frac{\E(n,k)}{\sqrt{n}}$, we do not need to cope with arbitrary $m$. Namely, we put 
\begin{equation} \label{mpe}
m= 2(p_e+\varepsilon)=2(p_o+\varepsilon)+1, \text{ where }\varepsilon=O(1).
\end{equation}
Thus, $m=\log n+O(1)$. This is sufficient for reaching the declared goal because of the following

\begin{remark} \label{lognplusC}
If $m-\log n=g(n)$ for any growing function $g$, then $\E(m,k,n)=o(\sqrt{n})$, and then $\sum_{m=\log(n)+g(n)}^n \E(m,k,n)=o(\sqrt{n})$ (see Fig.~\ref{graphs}); the same observation is true for the symmetric case $m-\log n=-g(n)$.
\end{remark}

From \eqref{mpe} and \eqref{pe} we get $k^m=n\cdot k^{2\varepsilon}\cdot(1- O(\frac{\log n}{n}))$, $C_v=1+O(\frac{\log n}{n})$, and
\begin{equation} \label{thetaamk}
\frac{\theta_v}{k}=1-\frac{(k-1)(1+O(\frac{\log n}{n}))}{n\cdot k^{1+2\varepsilon}}+O\Big(\frac{\log n}{n^2}\Big)=
1-\frac{k-1}{n\cdot k^{1+2\varepsilon}}+O\Big(\frac{\log n}{n^2}\Big)\,.
\end{equation}
Substituting $(1-\alpha/n)^n=e^{-\alpha}(1+O(\alpha/n))$ for big $n$, we have
\begin{multline} \label{termam}
1-C_v\Big(\frac{\theta_v}{k}\Big)^n= 1-\Big(1+O\Big(\frac{\log n}{n}\Big)\Big)\Big(1-\frac{k-1}{n\cdot k^{1+2\varepsilon}}+O\Big(\frac{\log n}{n^2}\Big)\Big)^n\\
= 1-\Big(1+O\Big(\frac{\log n}{n}\Big)\Big)e^{-\frac{k-1}{k^{1+2\varepsilon}}+O(\frac{\log n}{n})}\Big(1+O\Big(\frac 1n\Big)\Big)=1-e^{-\frac{k-1}{k^{1+2\varepsilon}}}+O\Big(\frac{\log n}{n}\Big)
\end{multline}
Finally, from \eqref{Enkm} we obtain
\begin{equation} \label{sumam}
\E(n,k,m)\ge \Pal(k,m) \cdot\Big(1-C_v\Big(\frac{\theta_v}{k}\Big)^n\Big)=
\begin{cases}
k^\varepsilon\Big(1-e^{-\frac{k-1}{k^{1+2\varepsilon}}}\Big)\sqrt{n}+O\Big(\frac{\log n}{\sqrt{n}}\Big), & m\text{ is even,}\\
k^{\varepsilon}\Big(1-e^{-\frac{k-1}{k^{1+2\varepsilon}}}\Big)\sqrt{kn}+O\Big(\frac{\log n}{\sqrt{n}}\Big), & m\text{ is odd.}
\end{cases}
\end{equation}
In particular, we proved the lower bound of order $\sqrt{n}$ for $\E(n,k)$, finishing the proof of Theorem~\ref{main}(1). Furthermore, consider the function $g(k,\varepsilon)=k^\varepsilon\big(1-e^{-\frac{k-1}{k^{1+2\varepsilon}}}\big)$. Clearly, $g(k,0)=\Omega(1)$. For odd $m$, $\varepsilon=0$ means that $p_o$ is integer. By the definition of $p_o$, for $p_o=i$ we have $n=n_i=k^{2i+1}+2i$. So if we take the sequence  $\{n_i\}_1^\infty$ and $m=2i+1$, we obtain $\frac{\E(n_i,k,m)}{\sqrt{n}}=\Omega(\sqrt{k})$. Comparing this to Lemma~\ref{upper}, we obtain statement 4 of Theorem~\ref{main}. On the other hand, let us show that $g(k,\varepsilon)=\Omega(k^{-|\varepsilon|})$ for any $\varepsilon$. Indeed, if $\varepsilon>0$, then the Maclaurin series for $e^{-\frac{k-1}{k^{1+2\varepsilon}}}$ is alternating and monotonely decreasing in absolute value, which gives us $g(k,\varepsilon)=k^{-\varepsilon}(1+o(1))$. If $\varepsilon<0$, then
$$
g(k,\varepsilon)=k^{-|\varepsilon|}\Big(1-\big(e^{-\frac{k-1}{k}}\big)^{k^{2|\varepsilon|}}\Big)>k^{-|\varepsilon|}\Big(1-e^{-\frac12}\Big)=\Omega(k^{-|\varepsilon|})\,.
$$
For any $n$ and the odd number $m=2(p_o+\varepsilon)+1$ which is the closest odd integer to $2p_o+1$, the absolute value of $\varepsilon$ is at most $1/2$. Then for this $m$ we have $\frac{\E(n_i,k,m)}{\sqrt{n}}=\Omega(1)$. According to Remark~\ref{antiphase}, there is a sequence $\{n_i\}_1^\infty$ (more precisely, one can take $n_i=k^{2i}+2i-1$) such that $\frac{\E(n_i,k)}{\sqrt{n}}=O(1)$. Thus, we finished the proof of Theorem~\ref{main}(3).

\medskip
Note that the statement 2 of Theorem~\ref{main} is not proved yet: from statements 3 and 4 it follows that the limit doest not exist for $k$ big enough, while we have to prove this fact for all $k$. To do this, we need to tighten both upper and lower bounds.

\section{Tight two-sided bounds}

\begin{lemma} \label{whp}
With high probability, all borders of a randomly chosen palindrome of length $m$ have lengths less than $\lfloor\log m\rfloor$. 
\end{lemma}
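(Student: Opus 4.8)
The plan is to translate the statement about borders into one about palindromic prefixes and then bound, by a union bound, the probability that a long palindromic prefix occurs. Since in the convention of this paper the whole word is always a border, by ``all borders'' I mean all \emph{proper} borders, i.e.\ those of length $<m$. The key observation is: if $w$ is a palindrome of length $m$ and $1\le\ell<m$, then $w$ has a border of length $\ell$ if and only if the prefix $w[1..\ell]$ is itself a palindrome. Indeed, since $w=\ola{w}$, the suffix $w[m{-}\ell{+}1..m]$ equals $\ola{w[1..\ell]}$, so the border equation $w[1..\ell]=w[m{-}\ell{+}1..m]$ reads exactly $w[1..\ell]=\ola{w[1..\ell]}$ (equivalently, a border of length $\ell$ corresponds to the period $m{-}\ell$). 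Thus it suffices to show that, with high probability, no prefix $w[1..\ell]$ with $\lfloor\log m\rfloor\le\ell<m$ is a palindrome, and I would do this by summing the probabilities of these events over $\ell$, splitting the range at $\ell=\lceil m/2\rceil$.

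For the ``short'' range $\lfloor\log m\rfloor\le\ell\le\lceil m/2\rceil$ I would use that a uniformly random palindrome of length $m$ is obtained by picking its first $\lceil m/2\rceil$ letters independently and uniformly. For $\ell\le\lceil m/2\rceil$ the condition ``$w[1..\ell]$ is a palindrome'' reads $w[i]=w[\ell{+}1{-}i]$ for $i=1,\dots,\lfloor\ell/2\rfloor$, and all positions involved lie in the uniformly chosen first half and are pairwise distinct; hence these are $\lfloor\ell/2\rfloor$ independent letter-equalities and the probability is exactly $k^{-\lfloor\ell/2\rfloor}$. Summing the geometric tail,
$$\sum_{\ell=\lfloor\log m\rfloor}^{\lceil m/2\rceil}k^{-\lfloor\ell/2\rfloor}=O\big(k^{-\lfloor\log m\rfloor/2}\big)=O\big(m^{-1/2}\big),$$
where the last equality uses $k^{\log m}=m$. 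This $O(m^{-1/2})$ term is the load-bearing estimate of the whole proof.

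For the ``long'' range $\lceil m/2\rceil<\ell<m$ the equalities $w[i]=w[\ell{+}1{-}i]$ fold back onto the first half and cease to be independent coordinates, so the probability cannot be read off directly; this is the main technical obstacle. I would instead argue through the period $p=m-\ell<m/2$ and count palindromes of length $m$ having period $p$. Such a palindrome takes a constant value on each orbit of the group generated by the period identification $i\sim i{+}p$ and the reflection $i\sim m{+}1{-}i$ on $\{1,\dots,m\}$, so their number is $k^{c}$ with $c$ the number of orbits. Positions of a common residue mod $p$ are chained into one orbit by the period identification, and the reflection then merges these residues in pairs $r\leftrightarrow(m{+}1{-}r)\bmod p$; hence $c\le\lceil p/2\rceil+O(1)$ and the number of such palindromes is $O(k^{\lceil p/2\rceil})$. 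Dividing by the total $\Pal(k,m)=k^{\lceil m/2\rceil}$ and summing over $p<m/2$ gives a contribution $O(k^{-m/4})$, negligible next to the short-range term.

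Combining the two ranges, the probability that $w$ has a proper border of length at least $\lfloor\log m\rfloor$ is $O(m^{-1/2})+O(k^{-m/4})=O(m^{-1/2})\to0$, which proves the lemma. The only genuinely delicate point is the long-prefix regime, where independence fails; the orbit-counting bound disposes of it, and its exponential smallness means the final rate is governed entirely by the elementary geometric sum in the short range.
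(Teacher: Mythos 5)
Your proof is correct, and its load-bearing part --- recasting a border of a palindrome as a palindromic prefix, taking a union bound over prefix lengths from $\lfloor\log m\rfloor$ up to $\lceil m/2\rceil$, and summing the geometric tail to get $O(m^{-1/2})$ --- is exactly the paper's argument (the paper phrases it as: a random word of length $2c$ or $2c{+}1$ is a palindrome with probability $k^{-c}$, so the probability of beginning with a palindrome of length at least $2c$ is at most $\frac{2k}{k-1}k^{-c}$, then sets $c=\lfloor\frac{\log m}{2}\rfloor$). Where you genuinely diverge is the long range $\ell>\lceil m/2\rceil$: the paper computes the probability that a random \emph{word} begins with a long palindrome and transfers the bound to a random \emph{palindrome} without comment, which is literally justified only for prefixes contained in the freely chosen first half; your identification of a long border with a short period $p=m-\ell$ and the orbit-counting bound ($O(k^{\lceil p/2\rceil})$ palindromes of length $m$ with period $p$, hence a total contribution $O(k^{-m/4})$) supplies an explicit argument for that regime. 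This costs you an extra paragraph but closes a step the paper leaves implicit; the paper's version is shorter at the price of silently treating the prefix distribution of a random palindrome as that of a random word at all lengths. In both versions the final rate $O(m^{-1/2})$ is governed entirely by the short-range geometric sum, as you note.
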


\begin{proof}
By the definition of a border, any border of a palindrome is a palindrome. Thus, a palindrome has a border of a given length if and only if it begins with a palindrome of this length. A random word of length $2c$ or $2c{+}1$ is a palindrome with probability $k^{-c}$. Hence, by the union bound, the probability for a random word to begin with a palindrome of length at least $2c$ is less then 
$$
2\cdot\sum_{i=c}^{\infty} k^{-c}=\frac{2k}{k-1}\cdot k^{-c}\,.
$$
If we take $c=\lfloor\frac {\log m}2\rfloor$, this probability will be $O(m^{-1/2})$. Thus, a palindrome of length $m$ has no borders of length at least $2\cdot\lfloor\frac{\log m}2\rfloor\le \lfloor\log m\rfloor$ with probability $1-O(m^{-1/2})$.
\end{proof}

Now pick a palindrome $w$ of length $m$ at random. By Lemma~\ref{whp}, its border array $\hat w$ looks like $10\cdots0u$, where $|u|\le\lfloor\log m\rfloor$ with high probability. Since $w$ definitely has a one-letter border, $|u|>0$. Therefore, Theorem~\ref{the:guod}(2) and Lemma~\ref{le1} allow us to take $x^{m-1}+1$ and $x^m+x^{\lfloor\log m\rfloor}$ as the lower and the upper bound for $f_w(x)$ when estimating $A_w(n)$ (the lower bound works always and the upper bound works with high probability).

Now we take the function $x^{m-1}+x^c$, where the number $c\in\{0,1,\ldots,\lfloor\log m\rfloor\}$ is unspecified, as $f_w$, and compute $A_w(n)$ from it. We have
$f'_w(x)=(m-1)x^{m-2}+cx^{c-1}$. Similar to \eqref{thetaam} and \eqref{Cam} we obtain
\begin{multline} \label{thetaaba}
\theta_w=k-\frac{1}{k^{m-1}+k^c}-\frac{(m-1)k^{m-2}+ck^{c-1}}{(k^{m-1}+k^c)^3}+O\Big(\frac{m^2}{k^{3m}}\Big)\\
=k-\frac{1}{k^{m-1}}+\frac{1}{k^{2m-2-c}}-\frac{m-1}{k^{2m-1}}+O\Big(\frac{k^{2c+3}+k^{c+2}m+m^2}{k^{3m}}\Big)\,,
\end{multline}
\begin{multline} \label{Caba}
C_w= \frac{1}{1-(k{-}\theta_w)^2f'_w(\theta)}
= 1+ \Big(\frac{1}{k^{m-1}}+O\Big(\frac{m-k^{c+1}}{k^{2m-1}}\Big)\Big)^2\Big((m-1)\theta_w^{m-2}+c\theta_w^{c-1}\Big)+O\Big(\frac{m^2}{k^{2m}}\Big)\\
= 1+ O\Big(\frac{m}{k^m}\Big)
\end{multline}
Next we substitute $m= 2(p_e+\varepsilon)=2(p_o+\varepsilon)+1$, where $\varepsilon=O(1)$. Recalling that $k^c=O(m)$, we obtain, similar to \eqref{thetaamk}, \eqref{termam},
\begin{gather} 
\frac{\theta_w}{k}=1-\frac{1}{n\cdot k^{2\varepsilon}}+O\Big(\frac{\log n}{n^2}\Big)\,,\label{thetaabak}\\
1-C_w\Big(\frac{\theta_w}{k}\Big)^n = 1-e^{-\frac{1}{k^{2\varepsilon}}}+O\Big(\frac{\log n}{n}\Big)\,. \label{termaba} 
\end{gather}
The resulting asymptotic formulas are independent of $c$. So \eqref{termaba} gives the asymptotic value of a term in \eqref{Enkm} with high probability. All terms falling into the remaining small group can be bounded using \eqref{termam}, which gives a formula equivalent to \eqref{termaba} up to a multiplicative constant. Hence we can substituite \eqref{termaba} for \emph{all} terms in \eqref{Enkm}, getting finally
\begin{equation} \label{sumaba}
\E(n,k,m)= \Pal(k,m) \cdot\Big(1-C_w\Big(\frac{\theta_w}{k}\Big)^n\Big)=
\begin{cases}
k^\varepsilon\Big(1-e^{-\frac{1}{k^{2\varepsilon}}}\Big)\sqrt{n}+O\Big(\frac{\log n}{\sqrt{n}}\Big), & m\text{ is even,}\\
k^{\varepsilon}\Big(1-e^{-\frac{1}{k^{2\varepsilon}}}\Big)\sqrt{kn}+O\Big(\frac{\log n}{\sqrt{n}}\Big), & m\text{ is odd.}
\end{cases} 
\end{equation}
To extract the bounds on $\frac{\E(n,k)}{\sqrt{n}}$ from \eqref{sumaba}, we look at the function appeared as the coefficient of $\sqrt{n}$. 

\begin{remark} \label{xex2}
The function $f(x)=x(1-e^{-1/x^2})$ behaves over the interval $(0,\infty)$ as follows:
\begin{enumerate}
\item $f(x)\sim 1/x$ (up to a cubically small term) as $x\to\infty$; more precisely, for $x>1$ one has $f(x)=\frac 1x-\frac 1{2x^3}+\frac 1{6x^5}-\Delta$, where $0<\Delta<\frac 1{24x^7}$;
\item $f(x)\sim x$ (up to an exponentially small term $-xe^{-1/x^2}$) as $x\to0$;
\item $f(x)$ has a single maximum $\chi\approx 0.6382$ at the point $x_0\approx 0.8921$ and is nearly constant around this point (e.g., $f(1)=1-1/e\approx 0.6321$).
\end{enumerate}
\end{remark}

Now consider $F(k,\varepsilon)=\sum_{i=-\infty}^\infty f(k^{\varepsilon+i})$. By Remark~\ref{xex2}, this series clearly converges, being bounded by the sum of two geometric series with the same denominator $k^{-\varepsilon}$. Furthermore, $F(k,\varepsilon)$ is periodic with the period 1 for any fixed $k\in{\mathbb N}\backslash\{1\}$. 

To make the computation of the sum $\E(n,k)=\sum_{m=1}^n \E(n,k,m)$ easier, we first discard most of its terms, leaving $\sum_{m=\lfloor\log n\rfloor-c}^{\lfloor\log n\rfloor+c} \E(n,k,m)$, for some constant $c$. This produces an error of order $k^{-c/2}\sqrt{n}$ (see Fig.~\ref{graphs}; cf. Remark~\ref{lognplusC}). Every term of the remaining sum can be computed by the formula \eqref{sumaba}. Next we replace this finite sum with an infinite sum of terms \eqref{sumaba}, taken for all $\varepsilon$ such that $-\infty<\varepsilon<\infty$ and either $p_e+\varepsilon$ or $p_o+\varepsilon$ is an integer. By Remark~\ref{xex2}, the sum we thus added is also of order $k^{-c/2}\sqrt{n}$. Hence, we totally change $\E(n,k)$ by an amount of order $k^{-c/2}\sqrt{n}$. Since the constant $c$ can be taken big enough, we can neglect this change in our considerations and identify $\E(n,k)$ with this infinite sum, getting
\begin{equation} \label{EnkF}
\E(n,k)\approx \Big(F\big(k,\varepsilon\big)\sqrt{k}+F\big(k,\varepsilon+\frac 12\big)\Big)\sqrt{n},\text{ where } p_o(n,k)+\varepsilon\in\mathbb Z\,.
\end{equation}
In order to prove Theorem~\ref{main}(2), it remains to show that the function $F(k,\varepsilon)$ has no period $1/2$ for any fixed $k\in{\mathbb N}\backslash\{1\}$. For this, let us first consider $F(k,0)$ and $F(k,1/2)$. From \eqref{sumaba} and Remark~\ref{xex2} we have
\begin{equation}
F(k,0)= 1-\frac 1e+\frac 2{k-1}-\frac 1{2(k^3-1)}+\frac 1{6(k^5-1)}-\frac 1{ke^{k^2}}-\Delta,\text{ where }\Delta<\frac 1{24(k^7-1)},
\end{equation}
yielding $F(k,0)\ge 1 -\frac 1e +\frac 2{k-1}-\frac 1{2(k^3-1)}$ for $k\ge 3$. Similarly,
\begin{equation}
F(k,1/2)\le \frac {2\sqrt{k}}{k-1}-\frac {k^{3/2}}{2(k^3-1)}+\frac {k^{5/2}}{6(k^5-1)}-\frac 1{\sqrt{k}e^k}\,.
\end{equation}
Then
\begin{equation} \label{fk0}
F(k,0)- F(k,1/2)\ge 1 -\frac 1e+\frac {2(1-\sqrt{k})}{k-1}+\frac {(k^{3/2}-1)}{2(k^3-1)}-\frac {k^{5/2}}{6(k^5-1)}+\frac 1{\sqrt{k}e^k}\,.
\end{equation}
The difference \eqref{fk0} can be checked by hand or by computer-assisted symbolic computation to be positive for any $k\ge4$. Hence, the function $F(k,\varepsilon)$ has no period $1/2$ in these cases. This implies that no limit $\lim_{n\to\infty} \frac{\E(n,k)}{\sqrt{n}}$ exists according to \eqref{EnkF}. The cases $k=2$ and $k=3$ require a separate analysis, but since $k$ is fixed, this is feasible.  It appears that in each case $F(k,\varepsilon)$ has a single maximum and a single minimum on any interval of length $1$, and thus has no period 1/2. More detailed, $\max F(2,\varepsilon)\approx 2.55775$ at the point $x_0\approx 0.398$ and $\min F(2,\varepsilon)\approx 2.55647$ at the point $x_0\approx -0.103$; $\max F(3,\varepsilon)\approx 1.62212$ at the point $x_0\approx -0.251$ and $\min F(3,\varepsilon)\approx 1.60452$ at the point $x_0\approx 0.255$. This finally proves statement 2 and then Theorem~\ref{main}.

\begin{remark}
The difference between the maximum and the minimum in the binary case is really tiny; to prove its existence, all terms given in Remark~\ref{xex2}(1,2) are essential.
\end{remark}

With all the bounds obtained, the following proposition is easy.

\begin{proposition}
(1) $\lim_{k\to\infty} \underline{C}(k)=3-1/e$.\\
(2) $\lim_{k\to\infty}\overline{C}(k)/\sqrt{k}=\chi$, where $\chi\approx 0.6382$ is the maximum of the function $f(x)=x(1-e^{-1/x^2})$ in the interval $(0,\infty)$.
\end{proposition}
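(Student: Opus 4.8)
The plan is to read both constants off the asymptotic identity \eqref{EnkF}. Put $G(k,\varepsilon)=\sqrt{k}\,F(k,\varepsilon)+F(k,\varepsilon+\tfrac12)$, so that \eqref{EnkF} reads $\E(n,k)/\sqrt{n}=G(k,\varepsilon_n)+o(1)$ with $\varepsilon_n\equiv-p_o(n,k)\pmod1$. By \eqref{po}, $p_o(n,k)=\tfrac12(\log n-1)+o(1)$, and consecutive values of $\tfrac12\log n$ differ by $o(1)$, so the phases $\{p_o(n,k)\}$ are dense in $[0,1)$. As $G(k,\cdot)$ is continuous and $1$-periodic, this gives $\underline{C}(k)=\min_\varepsilon G(k,\varepsilon)$ and $\overline{C}(k)=\max_\varepsilon G(k,\varepsilon)$, reducing the proposition to extremizing $G(k,\cdot)$ and letting $k\to\infty$. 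The only structural fact I need about $F$ is that, after reducing $\varepsilon$ modulo $1$ into $(-\tfrac12,\tfrac12]$, one has $F(k,\varepsilon)=f(k^\varepsilon)+O(k^{-1/2})$ with $f(k^\varepsilon)\le\chi$: by $f(x)\le\min\{x,1/x\}$ (immediate from $1-e^{-t}\le\min\{t,1\}$; cf.\ Remark~\ref{xex2}) every term of the series defining $F$ other than the central one is $O(k^{-1/2})$, the tails being geometric with ratio $1/k$.

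For statement~(2) the factor $\sqrt{k}$ decouples the two summands of $G$. Dividing by $\sqrt{k}$ gives $\overline{C}(k)/\sqrt{k}=\max_\varepsilon\left(F(k,\varepsilon)+F(k,\varepsilon+\tfrac12)/\sqrt{k}\right)$, and as $F$ is uniformly bounded by $\chi+O(k^{-1/2})$, the second summand is $O(k^{-1/2})$ uniformly; hence $\overline{C}(k)/\sqrt{k}=\max_\varepsilon F(k,\varepsilon)+O(k^{-1/2})$. It remains to show $\max_\varepsilon F(k,\varepsilon)\to\chi$: taking $k^\varepsilon=x_0$ makes the central term equal $\chi$ while the rest is non-negative (lower bound), and $F(k,\varepsilon)\le f(k^\varepsilon)+O(k^{-1/2})\le\chi+O(k^{-1/2})$ (upper bound). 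This sandwich yields $\lim_{k\to\infty}\overline{C}(k)/\sqrt{k}=\chi$.

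For statement~(1) the factor $\sqrt{k}$ instead localizes the minimum. If the reduced phase stays a fixed distance $\delta>0$ from $\pm\tfrac12$, then $k^\varepsilon\in[k^{-1/2+\delta},k^{1/2-\delta}]$ and $\sqrt{k}\,F(k,\varepsilon)\ge\sqrt{k}\,f(k^\varepsilon)$ is at least a positive constant times $k^{1/2-|\varepsilon|}\ge k^{\delta}\to\infty$; so every minimizing phase approaches $\tfrac12\pmod1$. At $\varepsilon=\tfrac12$ the two non-negligible terms are $f(\sqrt{k})$ and $f(1/\sqrt{k})$, each asymptotic to $1/\sqrt{k}$, giving $\sqrt{k}\,F(k,\tfrac12)=k(1-e^{-1/k})+(1-e^{-k})+o(1)\to2$, while $F(k,\tfrac12+\tfrac12)=F(k,0)=f(1)+O(1/k)\to1-1/e$. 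Thus $G(k,\tfrac12)\to2+(1-1/e)=3-1/e$, which already proves $\underline{C}(k)\le3-1/e+o(1)$.

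The main obstacle is the matching lower bound in~(1). Since the minimizing phase approaches $\tfrac12$, the two summands of $G$ are of the same order there and must be optimized jointly: writing $x=k^{\varepsilon-1/2}$, which stays bounded along any minimizing sequence by the blow-up estimate above, one finds $G(k,\varepsilon)\to x+\tfrac1x+f(x)$, so that $\lim_{k\to\infty}\underline{C}(k)=\min_{x>0}\Psi(x)$ with $\Psi(x)=x+\tfrac1x+f(x)$. The crux is to evaluate this one-dimensional minimum; the claimed value $3-1/e$ is exactly $\Psi(1)$, the value at the symmetric phase $\varepsilon=\tfrac12$, and the near-constancy of $f$ around its maximum (Remark~\ref{xex2}(3)) is what makes identifying the minimizer of $\Psi$ the delicate point of the whole argument. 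A final technicality is to control, uniformly in the phase, the error hidden in the ``$\approx$'' of \eqref{EnkF}, so that $\lim_k$ and $\min_\varepsilon$ may legitimately be interchanged.
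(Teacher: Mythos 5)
Your treatment of statement (2) is correct and is essentially the paper's own argument: choose the phase with $k^\varepsilon\to x_0$ for the lower bound $\chi$, and combine $f\le\chi$ with the geometric decay of the non-central terms of $F$ for the upper bound. No issues there.

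For statement (1) there is a genuine gap, and it sits exactly where you flagged it -- but it is worse than a missing computation. Your reduction of the lower bound to $\lim_{k\to\infty}\underline{C}(k)=\min_{x>0}\Psi(x)$ with $\Psi(x)=x+\tfrac1x+f(x)$ and $x=k^{\varepsilon-1/2}$ is correct (it is the precise form of the paper's looser claim that minimizing phases tend to $\tfrac12$), but you never evaluate the minimum, and it does not equal $\Psi(1)$. Since $x+\tfrac1x$ is critical at $x=1$ while $f$ is not (its maximum sits at $x_0\approx 0.892\ne 1$), one has $\Psi'(1)=f'(1)=1-e^{-1}(1+2)=1-3/e<0$; numerically $\Psi(1.1)\approx 2.6277$ while $\Psi(1)=3-1/e\approx 2.6321$, and $\min_{x>0}\Psi(x)\approx 2.6277$ is attained near $x\approx 1.09$. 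So your (correct) reduction, once completed, yields $\lim_k\underline{C}(k)=\min_{x>0}\bigl(x+\tfrac1x+x(1-e^{-1/x^2})\bigr)$, a constant strictly below the claimed $3-1/e$; only your upper bound $\underline{C}(k)\le 3-1/e+o(1)$ survives. By comparison, the paper's proof of (1) simply asserts that as $\varepsilon\to\tfrac12$ the odd- and even-length contributions are $2+O(1/k)$ and $1-1/e+O(1/k)$; in your parametrization this amounts to fixing $x=1$ and ignores the competing phases with $k^{\varepsilon-1/2}$ bounded away from $1$ -- precisely the phases that do slightly better (consistently, Table~\ref{tab} reports $\underline{\varepsilon}=-0.485$ rather than $-0.5$ for $k=50$). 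So you cannot close the argument as proposed: either you need an ingredient not present in the paper, or the constant in statement (1) should be $\min_{x>0}\Psi(x)$ rather than $3-1/e$.
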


\begin{proof}
For statement 1, note that \eqref{sumaba} gives us a coefficient of order $k^{1/2-|\varepsilon|}$ for the number of odd-length palindromes and a coefficient of order $k^{-|\varepsilon|}$ for the number of even-length palindromes. So we can get a coefficient of order $O(1)$ only by taking a subsequence of $n$'s such that the corresponding $\varepsilon$'s tend to $1/2$. In this case, even palindromes contribute $1-1/e+O(1/k)$ and odd-length palindromes contribute $2+O(1/k)$, whence the result.

Let us turn to statement 2. Let $\varepsilon_0=\log x_0$, where $x_0$ is defined in Remark~\ref{xex2}(3). One can choose a subsequence of $n$'s such that the corresponding sequence of $\varepsilon$'s converges to $\varepsilon_0$. Then the expectations $\E(n,k,m)$, corresponding to these $n$'s and $\varepsilon$'s, form a sequence, equivalent to $\chi\sqrt{kn}$ as $n\to\infty$, see \eqref{sumaba}. On the other hand, the function $\chi\sqrt{kn}$ bounds any sequence of expectations $\E(n,k,m)$ from above. It remains to note that at most one term $\E(n,k,m)$ for a given $n$ is proportional to $\sqrt{kn}$ while all others are proportional to $k^c\sqrt{n}$ for some $c\le0$. The result now follows.
\end{proof}

\section{Numerical results and possible extensions}

Below we give, in Table~\ref{tab}, the numerical estimates for some particular values of $\underline{C}(k)$ and $\overline{C}(k)$ together with the corresponding values of $\varepsilon$ such that $p_o+\varepsilon$ is an integer and $|\varepsilon|\le 1/2$. We compare these numerical values against the experimental data on the palindromic richness of random words. The problem of counting distinct palindromic factors in a word can be efficiently solved: see \cite{GPR10} for an offline algorithm and \cite{KRS13} for an online one. This makes possible the experiments with long random words. For each length, Table~\ref{tab} contains the average number of palindromes for 1000 experiments, divided by $\sqrt{n}$. The experimental data agree quite well with the theory; for longer words the agreement is better. We also mention a special situation with the binary alphabet: the difference $\overline{C}(2)-\underline{C}(2)$ is very small, and the values of $\underline{\varepsilon}$ and $\overline{\varepsilon}$ are ``swapped'' compared to bigger alphabets.

\begin{table} 
\caption{Theoretical values of the constants $\underline{C}(k)=\liminf_{n\to\infty}\frac{\E(n,k)}{\sqrt{n}}$ and $\underline{C}(k)=\limsup_{n\to\infty}\frac{\E(n,k)}{\sqrt{n}}$, the corresponding values of the distance $\varepsilon$ between $p_o(n,k)$ and the closest integer, and the experimental data on the number of distinct palindromes in random words of lengths fitting to the obtained values of $\varepsilon$.}
\centerline{
\tabcolsep=4pt
\begin{tabular}{|r|l|r|l|r|r|l|r|l|}
\hline
$k$&$\underline{C}(k)$&$\underline{\varepsilon}$&$\overline{C}(k)$&$\overline{\varepsilon}$&$\underline{n}$&$Pals_{\underline{n}}/\sqrt{\underline{n}}$ &$\overline{n}$&$Pals_{\overline{n}}/\sqrt{\overline{n}}$\\
\hline
2&6.17315& -0.103&6.17368&0.398&618843800&6.17171&1238545800&6.17276\\
3&4.40121& 0.255&4.41410&-0.251&8188445&4.40052&24940577&4.41358\\
4&3.81315& 0.360&3.85763&-0.167&24747862&3.81195&6657745&3.85465 \\
5&3.51925& 0.409 &3.60893&-0.129&13076560&3.51834&2914038&3.60581\\
6&3.34259& 0.438 &3.48553&-0.108&2096750&3.34202&14840282&3.48520\\
10&3.02693& 0.485 &3.41133&-0.071&1071524&3.02544&13842043&3.41175 \\
50&2.70152& -0.485 &5.09183&-0.032&5877686&2.70007&160063&5.08441 \\
\hline
\end{tabular}
}
\label{tab}
\end{table}

\medskip
Finally, we point out that the technique used in this paper can be applied to computing the expected numbers of other types of repetitions in random words. For example, it is quite easy to show that the expected number of squares in a $k$-ary word of length $n$ is $\sqrt{n}$; moreover, the ratio of this number and $\sqrt{n}$ tends to a constant as $k\to \infty$. Indeed, squares are very much alike the even-length palindromes (e.g., the left graph of Fig.~\ref{graphs} suits for squares as well), and there is no analog of odd-length palindromes to disturb the general picture. The only significant difference between squares and even palindromes is in their borders: palindromes usually have only short borders, while a square of length $n$ always has the border of length $n/2$, and \emph{with high probability has no longer borders}. The corresponding difference in border polynomials affects the constant before the $\sqrt{n}$ term, but not the term itself (compare \eqref{sumam} against \eqref{sumaba}). Thus, the analog of \eqref{sumaba} can be obtained, with slightly different constant and without the alternative for odd-length palindromes.

\bibliographystyle{fundam}          
\bibliography{my_bib}            

\newcommand{\noopsort}[1]{} \newcommand{\singleletter}[1]{#1}
\begin{thebibliography}{1}

\bibitem{DJP01}
Droubay, X., Justin, J., Pirillo, G.: Episturmian words and some constructions
  of de {Luca} and {Rauzy},
\newblock \emph{Theoret. Comput. Sci.}, \textbf{255}, 2001, 539--553.

\bibitem{FiZa13}
Fici, G., Zamboni, L.: On the least number of palindromes contained in an
  infinite word,
\newblock \emph{Theoret. Comput. Sci.}, \textbf{481}, 2013, 1--8.

\bibitem{GJWZ09}
Glen, A., Justin, J., Widmer, S., Zamboni, L.: Palindromic richness,
\newblock \emph{European J. Combinatorics}, \textbf{30}(2), 2009, 510--531.

\bibitem{GPR10}
Groult, R., Prieur, E., Richomme, G.: Counting distinct palindromes in a word
  in linear time,
\newblock \emph{Inform. Process. Lett.}, \textbf{110}, 2010, 908--912.

\bibitem{GuOd78}
Guibas, L.~J., Odlyzko, A.~M.: Maximal prefix-synchronized codes,
\newblock \emph{SIAM J. Applied Math.}, \textbf{35}, 1978, 401--418.

\bibitem{GuOd81}
Guibas, L.~J., Odlyzko, A.~M.: String overlaps, pattern matching, and
  nontransitive games,
\newblock \emph{J. Combin. Theory. Ser. A}, \textbf{30}, 1981, 183--208.

\bibitem{GSS15}
Guo, C., Shallit, J., Shur, A.~M.: On the combinatorics of palindromes and
  antipalindromes,
\newblock arXiv:1503.09112 [cs.FL], 2015.

\bibitem{KRS13}
Kosolobov, D., Rubinchik, M., Shur, A.~M.: Finding distinct subpalindromes
  online,
\newblock \emph{Proc. Prague Stringology Conference. PSC 2013}, Czech Technical
  University in Prague, 2013.

\bibitem{RiSh14}
Richmond, L.~B., Shallit, J.: Counting the Palstars,
\newblock \emph{Electronic J. Combinatorics}, \textbf{21}(3), 2014, \# P3.25.

\end{thebibliography}

\end{document}